\documentclass[12pt]{amsart}
\usepackage{amsmath,amssymb,amsthm,amscd,amsfonts,geometry,color}
\renewcommand{\labelenumi}{\rm(\theenumi)}

\theoremstyle{plain}
\newtheorem{thm}{Theorem}[section]
\newtheorem{prop}[thm]{Proposition}
\newtheorem{cor}[thm]{Corollary}
\newtheorem{lem}[thm]{Lemma}

\theoremstyle{remark}

\theoremstyle{definition}

\newcommand{\R}{\mathbb{R}}                       
\newcommand{\N}{\mathbb{N}}                       

\newcommand{\lp}{\operatorname{L^p}}            

\newcommand{\esssup}{\operatorname{esssup}}      

\begin{document}

\title[Compactness of averaging operators]{Compactness of averaging operators on non-reflexive Lebesgue spaces}
\author{Katsuhisa Koshino}
\address[Katsuhisa Koshino]{Faculty of Engineering, Kanagawa University, Yokohama, 221-8686, Japan}
\email{ft160229no@kanagawa-u.ac.jp}
\subjclass[2020]{Primary 47B01; Secondary 46E30, 46B50.}
\keywords{metric measure space, Lebesgue space, Banach function space, averaging operator, compact operator, doubling, reflexive}
\maketitle

\begin{abstract}
Let $X$ be a Borel and Borel-regular metric measure space whose closed balls are of positive and finite measure.
In this paper, we shall give equivalent conditions for averaging operators on non-reflexive Lebesgue spaces $L^1(X)$ and $L^\infty(X)$ on X to be compact,
 where X has some doubling property and satisfies certain uniform continuity between metric and measure.\end{abstract}

\section{Introduction}

Throughout this paper, let $X = (X,d,\mu)$ be a Borel metric measure space,
 where $d$ is a metric and $\mu$ is a measure on $X$ such that any closed ball is of positive and finite measure.
Moreover, $X$ is assumed to be \textit{Borel-regular},
 which means that each subset of $X$ is contained in some Borel set with the same measure.
Denote the set of real numbers by $\R$ and the one of positive integers by $\N$,
 and let $p \in [1,\infty]$ and fix $r \in (0,\infty)$.
The symbol $B(x,s)$ stands for the closed ball centered at a point $x \in X$ with radius $s > 0$,
 and the symbol $\chi_A$ denotes the characteristic function on a subset $A \subset X$.
Let $L^0(X)$ be the space of measurable functions on $X$,
 and let $L^p(X) \subset L^0(X)$ be Lebesgue space.
Define the \textit{averaging operator} $A_r$ on locally integral functions by
 $$A_rf(x) = \frac{1}{\mu(B(x,r))}\int_{B(x,r)} f(y) d\mu(y).$$
In this paper, we will give equivalent conditions for averaging operators on Lebesgue spaces $L^1(X)$ and $L^\infty(X)$ to be compact,
 in the case that X satisfies certain doubling property and uniform continuity between metric and measure.

For a positive number $s > 0$, a metric measure space $X$ is said to be \textit{$s$-doubling} if there exists $\gamma \geq 1$ such that $\mu(B(x,2s)) \leq \gamma\mu(B(x,s))$ for all points $x \in X$.
On the uniform continuity between metrics and measures, we consider the following properties $(\star)_s$ and $(\ast)_s$.
\begin{itemize}
 \item[$(\star)_s$] For each $\epsilon > 0$, there is $\delta \in (0,s)$ such that $\mu(B(x,s + \delta) \setminus B(x,s - \delta)) < \epsilon$ for any point $x \in X$.
 \item[$(\ast)_s$] For every $\epsilon > 0$, there exists $\delta > 0$ such that for all points $x, y \in X$, if $d(x,y) < \delta$,
 then $\mu(B(x,s) \triangle B(y,s)) < \epsilon$.\footnote{The symbol $A \triangle B$ means the symmetric difference of subsets $A, B \subset X$.}
\end{itemize}
We shall prove the following theorems.

\begin{thm}\label{equiv.l1}
Let $X$ be a metric measure space having the $s$-doubling property and the property $(\star)_s$ for any $s > 0$.
Then $A_r : L^1(X) \to L^1(X)$ is compact if and only if $X$ is bounded.
\end{thm}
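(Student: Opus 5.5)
Both directions use the kernel form $A_rf(x)=\int_X k(x,y)f(y)\,d\mu(y)$ with $k(x,y)=\chi_{B(x,r)}(y)/\mu(B(x,r))$. Since $y\in B(x,r)$ iff $x\in B(y,r)$, the adjoint acts on $L^\infty(X)$ by
$$A_r^*g(y)=\int_{B(y,r)}\frac{g(x)}{\mu(B(x,r))}\,d\mu(x).$$
The $r$-doubling property controls the normalisation: for $x\in B(y,r)$ we have $B(y,r)\subset B(x,2r)$, so $\mu(B(y,r))\le\gamma\mu(B(x,r))$. Hence $\int_X k(x,y)\,d\mu(x)\le\gamma$, and $A_r$ is bounded on $L^1(X)$ with norm at most $\gamma$.

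\emph{Boundedness implies compactness.} Suppose $X$ is bounded, say $X\subset B(x_0,R)$. Doubling applied repeatedly to radii $s,2s,\dots$ shows that every ball of radius $s$ has measure at least a fixed fraction of $\mu(X)<\infty$. From this, $X$ is totally bounded: a maximal $s$-separated set has disjoint balls of radius $s/2$, each of measure bounded below, so it is finite. Also $\mu(B(x,r))$ is bounded below by some $c>0$.

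I would show that $A_r$ maps the unit ball of $L^1$ into a set of functions that is uniformly bounded and equicontinuous modulo a small error. Uniform boundedness is immediate: $|A_rf(x)|\le\|f\|_1/c$. For continuity, write
$$|A_rf(x)-A_rf(x')|\le \Bigl|\tfrac1{\mu(B(x,r))}-\tfrac1{\mu(B(x',r))}\Bigr|\|f\|_1+\tfrac1c\int_{B(x,r)\triangle B(x',r)}|f|.$$
Property $(\star)_r$ gives $B(x,r)\triangle B(x',r)\subset B(x,r+\delta)\setminus B(x,r-\delta)$ when $d(x,x')<\delta$, so the measures of these symmetric differences are uniformly small. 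This handles the first term. The second term is not controlled uniformly for arbitrary $f$ in the unit ball, since $|f|$ may concentrate on the thin annulus.

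So I would instead argue by approximation. Partition $X$ into finitely many Borel pieces $E_i$ of diameter less than $\delta$, using total boundedness, and compare $A_r$ with the finite-rank operator
$$Tf=\sum_i \bigl(A_r(\chi_{E_i}f)\bigr)(x_i)\,\chi_{E_i}.$$
The error is then $\int_X\!\int_X |k(x,y)-k(\pi(x),y)|\,|f(y)|\,d\mu(y)\,d\mu(x)$, where $\pi(x)$ is the chosen centre $x_i$ of the piece containing $x$. By Fubini this is at most $\|f\|_1\sup_y\int_X|k(x,y)-k(\pi(x),y)|\,d\mu(x)$. For fixed $y$, the integrand is nonzero only where exactly one of $x,\pi(x)$ lies in $B(y,r)$, or where the normalisations differ. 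The first set lies in the annulus $B(y,r+\delta)\setminus B(y,r-\delta)$, whose measure is uniformly small by $(\star)_r$. The normalisation differences are uniformly small as shown above. Hence $\|A_r-T\|\to0$, and $A_r$ is compact as a norm limit of finite-rank operators. This uniform-in-$y$ estimate is the main technical step.

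\emph{Compactness implies boundedness.} Suppose $X$ is unbounded. Choose points $x_n$ with $d(x_n,x_m)>4r$ for $n\ne m$, and set $f_n=\chi_{B(x_n,r)}/\mu(B(x_n,r))$, so $\|f_n\|_1=1$. The function $A_rf_n$ is supported in $B(x_n,2r)$, and these supports are pairwise disjoint, so
$$\|A_rf_n-A_rf_m\|_1=\|A_rf_n\|_1+\|A_rf_m\|_1.$$
A lower bound on $\|A_rf_n\|_1$ comes from
$$\|A_rf_n\|_1=\int_{B(x_n,r)}\frac{\mu(B(x,r)\cap B(x_n,r))}{\mu(B(x,r))\,\mu(B(x_n,r))}\,d\mu(x).$$
Restrict to $x\in B(x_n,r/2)$. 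There $B(x,r/2)\subset B(x_n,r)\cap B(x,r)$, so the fraction $\mu(B(x,r)\cap B(x_n,r))/\mu(B(x,r))$ is at least $\mu(B(x,r/2))/\mu(B(x,r))\ge\gamma^{-1}$ by $r/2$-doubling. Also $\mu(B(x_n,r/2))/\mu(B(x_n,r))\ge\gamma^{-1}$. Together these give $\|A_rf_n\|_1\ge\gamma^{-2}$, where $\gamma$ is the larger of the $r$- and $r/2$-doubling constants. Thus $(A_rf_n)$ has no Cauchy subsequence, and $A_r$ is not compact.
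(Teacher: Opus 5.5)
Your proof is correct. The necessity half is essentially the paper's argument (Theorem~\ref{nec.l1}): separated points, normalized bumps, and images with disjoint supports. The paper uses $f_n=\chi_{B(x_n,2r)}/\mu(B(x_n,2r))$, so $A_rf_n$ equals $1/\mu(B(x_n,2r))$ exactly on $B(x_n,r)$ and only $r$-doubling is needed. Your bumps on $B(x_n,r)$ also require $r/2$-doubling, which the hypotheses supply. Your displayed formula for $\|A_rf_n\|_1$ should be an inequality $\geq$, since $A_rf_n$ is supported in $B(x_n,2r)$ rather than $B(x_n,r)$; only the lower bound is used, so this is harmless.

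For sufficiency you take a genuinely different route. The paper goes through its Kolmogorov--Riesz type criterion (Theorem~\ref{cpt.}). Its $L^1$ case, via Lemma~\ref{equiconti.} and Proposition~\ref{av.rel.cpt.}, needs the extra hypothesis that $\|A_\sigma f-f\|_1$ can be made small uniformly over the family. That hypothesis is there precisely to handle the concentration of $|f|$ on thin sets that you noticed. The paper then verifies $\sup_{f\in F}\|A_s(A_rf)-A_rf\|_1\le\epsilon$ by a Tonelli computation. That computation turns the symmetric differences $B(x,r)\triangle B(y,r)$ into annuli $B(z,r+s)\setminus B(z,r-s)$ around the integrated point $z$.

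You use the same swap of integration, but as a Schur-type bound, uniform in $y$, on $\int_X|k(x,y)-k(\pi(x),y)|\,d\mu(x)$. From it you conclude directly that $A_r$ is an operator-norm limit of explicit finite-rank operators. This avoids Theorem~\ref{cpt.}, Proposition~\ref{av.rel.cpt.} and Lemma~\ref{equiconti.} entirely. It also needs $(\star)$ only at the radius $r$; doubling supplies total boundedness and $\inf_x\mu(B(x,r))>0$. The paper's route, by contrast, also uses $(\ast)$ at the auxiliary small radius $s$ when Proposition~\ref{av.rel.cpt.} is applied to $A_s$ inside the proof of Theorem~\ref{cpt.}. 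What the paper gains in exchange is a compactness criterion for subsets of $L^p(X)$ that is of independent use.

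One correction is needed. As displayed, $Tf=\sum_i(A_r(\chi_{E_i}f))(x_i)\chi_{E_i}$ is not the operator your error estimate is about. It should be $Tf=\sum_i(A_rf)(x_i)\chi_{E_i}$, i.e.\ $Tf(x)=A_rf(\pi(x))$. This operator has finite rank because each $f\mapsto A_rf(x_i)$ is a bounded functional on $L^1(X)$ and $\mu(E_i)\le\mu(X)<\infty$.
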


\begin{thm}\label{equiv.linf}
Let $X$ be an $s$-doubling metric measure space for all $s > 0$ having the property $(\ast)_r$.
Then $A_r : L^\infty(X) \to L^\infty(X)$ is compact if and only if $X$ is bounded.
\end{thm}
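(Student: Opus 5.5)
The statement to prove is Theorem~\ref{equiv.linf}. I would prove the two implications separately. For ``bounded $\Rightarrow$ compact'', suppose $X$ is bounded, say $X \subset B(x_0,R)$. Doubling at all scales then gives a uniform lower bound $\mu(B(x,r)) \geq c > 0$ for every $x$. Indeed, $B(x_0,R) \subset B(x,2R)$, and iterating the doubling inequality $k$ times (with $2^k r \geq 2R$) yields $\mu(B(x,r)) \geq \gamma^{-k}\mu(B(x_0,R))$, so $\mu(X)$ is finite and positive.

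Next I would show that the image of the unit ball of $L^\infty(X)$ under $A_r$ is a uniformly bounded, uniformly equicontinuous family of genuine functions on $X$. Boundedness is clear: $|A_rf| \leq \|f\|_\infty$. For equicontinuity, take $x,y$ with $d(x,y)<\delta$ and split
$$|A_rf(x)-A_rf(y)| \leq \frac{1}{\mu(B(x,r))}\Bigl|\int_{B(x,r)}f-\int_{B(y,r)}f\Bigr| + \Bigl|\int_{B(y,r)} f\Bigr|\,\Bigl|\frac{1}{\mu(B(x,r))}-\frac{1}{\mu(B(y,r))}\Bigr|.$$
Both terms are at most $2\|f\|_\infty\,\mu(B(x,r)\triangle B(y,r))/c$, which is small by $(\ast)_r$.

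To conclude compactness I would use total boundedness. Doubling on a bounded space gives a finite $\delta$-net $\{x_1,\dots,x_n\}$: a maximal $\delta/2$-separated set has disjoint balls $B(x_i,\delta/4)$, each of measure bounded below by the same iteration argument, while $\mu(X)<\infty$. So the map $f \mapsto (A_rf(x_i))_i$ lands in a bounded subset of $\R^n$, and a finite $\epsilon$-net there lifts to a $3\epsilon$-net in sup norm thanks to equicontinuity. Hence the image is totally bounded in $L^\infty$, which is Arzel\`a--Ascoli on a possibly non-compact but totally bounded space. The sup norm dominates the $L^\infty$ norm, so this suffices.

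For ``compact $\Rightarrow$ bounded'' I argue by contraposition: assume $X$ is unbounded. Choose points $x_n$ with $d(x_n,x_m) > 4r$ for $n \neq m$, which is possible by unboundedness, and let $f_n = \chi_{B(x_n,r)}$. Then $\|f_n\|_\infty \leq 1$, and $A_rf_n(x_n)=1$. For $y \in B(x_m,r)$ with $m \neq n$ we have $B(y,r)\cap B(x_n,r)=\emptyset$, so $A_rf_n = 0$ on $B(x_m,r)$. Since $A_rf_n$ is continuous by the estimate above (locally one still gets a lower bound on $\mu(B(y,r))$ near $x_n$), it is at least $1/2$ on a small ball around $x_n$. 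That ball has positive measure, since closed balls have positive measure. Hence $\|A_rf_n - A_rf_m\|_\infty \geq 1/2$ for all $n \neq m$, so $(A_rf_n)$ has no Cauchy subsequence and $A_r$ is not compact.

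The step I expect to be the main obstacle is this continuity near $x_n$ in the unbounded case, where there is no global lower bound on measures. I would handle it by doubling: for $d(y,x_n)<r$, $\mu(B(y,r)) \geq \gamma^{-2}\mu(B(x_n,r))$, and then apply $(\ast)_r$. Alternatively, I would avoid continuity altogether by directly estimating $A_rf_n(y) \geq \mu(B(x_n,r)\cap B(y,r))/\mu(B(y,r))$, which tends to $1$ as $y \to x_n$ by $(\ast)_r$.
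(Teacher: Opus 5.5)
Your proof is correct. The sufficiency half is essentially the paper's argument (Theorem~\ref{suff.linf}, via Lemmas~\ref{equiconti.linf}, \ref{bdd.linf} and Proposition~\ref{tot.bdd.}), except that you reprove the total boundedness and the lower bound on $\mu(B(x,r))$ inline. Your equicontinuity estimate is also slightly cleaner: it cancels $\mu(B(y,r))$, so you never need $\sup_x \mu(B(x,r)) < \infty$. Two cosmetic points. First, the doubling constants depend on the scale, so $\gamma^{-k}$ should be read as a product of finitely many constants $\gamma_r,\gamma_{2r},\dots$. Second, disjointness of the closed balls $B(x_i,\delta/4)$ needs strict separation $d(x_i,x_j) > \delta/2$.

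The necessity half is where you genuinely differ. The paper uses $f_n = \chi_{B(x_n,2r)}$ with $d(x_n,x_m) > 4r$. Then for every $x \in B(x_n,r)$ one has $B(x,r) \subset B(x_n,2r)$, so $A_rf_n \equiv 1$ on the whole ball $B(x_n,r)$. Also $A_rf_n \equiv 0$ off $B(x_n,3r)$. This gives $\|A_rf_n - A_rf_m\|_\infty \ge 1$ at once, with no continuity, no doubling and no $(\ast)_r$. Consequently Theorem~\ref{nec.linf} holds for every space satisfying the standing assumptions.

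Your choice $f_n = \chi_{B(x_n,r)}$ fixes the value of $A_rf_n$ only at the single point $x_n$. That is exactly what creates the ``main obstacle'' you flagged. Your fix is valid: use $A_rf_n(y) \ge 1 - \mu(B(x_n,r)\triangle B(y,r))/\mu(B(y,r))$ together with $(\ast)_r$, and bound $\mu(B(y,r))$ from below either by doubling or simply by $\mu(B(y,r)) \ge \mu(B(x_n,r)) - \mu(B(x_n,r)\triangle B(y,r))$. Here the radius of the small ball is allowed to depend on $n$. Still, enlarging the support of the test functions removes the need for any of this, and it shows that the ``only if'' direction is hypothesis-free.
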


\section{Averaging operators on Banach function spaces}

In this section, we shall review the compactness of averaging operators on Banach function spaces.
A normed linear space $E(X) = (E(X),\|\cdot\|_E) \subset L^0(X)$ with a norm $\|\cdot\|_E$ is a \textit{Banach function space} on $X$ if the following conditions are satisfied:
\begin{itemize}
 \item[(B1)] for each $f \in E(X)$, $\|f\|_E = \||f|\|_E$,
 and $\|f\|_E = 0$ if and only if $f = 0$ a.e.;
 \item[(B2)] for any $f, g \in E(X)$, if $0 \leq g \leq f$ a.e.,
 then $\|g\|_E \leq \|f\|_E$;
 \item[(B3)] for all $f, f_n \in E(X)$, $n \in \N$, if $0 \leq f_n \uparrow f$ a.e.,
 then $\|f_n\|_E \uparrow \|f\|_E$;
 \item[(B4)] for each measurable set $A \subset X$ with $\mu(A) < \infty$, $\chi_A \in E(X)$ and $\|\chi_A\|_E < \infty$;
 \item[(B5)] for every measurable set $A \subset X$, if $\mu(A) < \infty$,
 then there is $\alpha(A) \in (0,\infty)$ such that $\int_A |f| d\mu \leq \alpha(A)\|f\|_E$ for all $f \in E(X)$.
\end{itemize}
The space $E(X)$ is a Banach space (cf.~Theorem~3.1.3 of \cite{EE}).
For example, Lebesgue spaces are Banach function spaces.
Each $f \in E(X)$ is locally integrable by the condition (B5),
 and hence $A_rf$ is measurable (cf.~\cite[Theorem~8.3]{Ye}).
It is said that a measurable set $A \subset X$ has \textit{the Vitali covering property} if the following condition holds:
\begin{itemize}
 \item Let $\mathcal{B}$ be any family of closed balls centered in $A$ with uniformly bounded radii such that for each $x \in A$, there is $s > 0$ such that $B(x,s) \in \mathcal{B}$ and $\inf\{s > 0 \mid B(x,s) \in \mathcal{B}\} = 0$.
 Then there exists a subcollection $\mathcal{B}' \subset \mathcal{B}$ consisting of pairwise disjoint countable closed balls such that $\mu(A \setminus \bigcup \mathcal{B}') = 0$.
\end{itemize}
A norm $\|\cdot\|_E$ on $E(X)$ is called to be \textit{absolutely continuous} provided that $\|f\chi_{A_i}\|_E \to 0$ as $i \to \infty$ for each $f \in E(X)$ whenever $\{A_i\}_{i \in \N}$ is a sequence of measurable sets in $X$ with $\chi_{A_i}$ converging to $0$ a.e..
When $\mu(X) < \infty$, $\|\cdot\|_E$ is said to be \textit{absolutely continuous with respect to $1$} if the above holds for the constant $f = 1$.
In the paper \cite{Kos25}, a sufficient condition and a necessary one are given for the compactness of averaging operators as follows:

\begin{thm}[Theorem~1.1 of \cite{Kos25}]\label{suff.}
Suppose that $A_r(E(X)) \subset E(X)$ and the following conditions are satisfied:
\begin{enumerate}
 \item $X$ has the Vitali covering property;
 \item $\inf\{\mu(B(x,r)) \mid x \in X\} > 0$;
 \item for each $x \in X$, $\mu(B(y,r)) \to \mu(B(x,r))$ and $\alpha(B(x,r) \triangle B(y,r)) \to 0$ as $y \to x$;
 \item $\|\cdot\|_E$ is absolutely continuous with respect to $1$.
\end{enumerate}
If $\mu(X) < \infty$,
 then $A_r : E(X) \to E(X)$ is compact.
\end{thm}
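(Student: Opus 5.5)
This is a result quoted from \cite{Kos25}, but here is how I would reprove it. The plan is to show that $A_r$ maps the unit ball of $E(X)$ into a relatively compact subset of $E(X)$. I would split this into two steps.

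\emph{Step 1: the image of the unit ball is uniformly bounded and equicontinuous.} By (B5) and condition (2), for $\|f\|_E\le 1$ and every $x$,
$$|A_rf(x)|\le \frac{\alpha(B(x,r))}{\mu(B(x,r))}\|f\|_E .$$
Since $\mu(X)<\infty$, we may take $\alpha(X)$ in place of $\alpha(B(x,r))$ by monotonicity of the integral. So $|A_rf|\le C:=\alpha(X)/m$, where $m=\inf_x\mu(B(x,r))>0$.

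For continuity, write
$$A_rf(x)-A_rf(y)=\frac{1}{\mu(B(x,r))}\Bigl(\int_{B(x,r)}f-\int_{B(y,r)}f\Bigr)+\Bigl(\frac{1}{\mu(B(x,r))}-\frac{1}{\mu(B(y,r))}\Bigr)\int_{B(y,r)}f .$$
The first term is at most $\alpha(B(x,r)\triangle B(y,r))/m$. The second is at most $|\mu(B(y,r))-\mu(B(x,r))|\,\alpha(X)/m^2$. By condition (3) both tend to $0$ as $y\to x$, uniformly over the unit ball. Hence the family $\{A_rf:\|f\|_E\le1\}$ is uniformly bounded and pointwise equicontinuous.

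\emph{Step 2: from equicontinuity to compactness in $E(X)$.} $X$ need not be compact, so Arzel\`a--Ascoli does not apply directly; I would replace it with a finite-measure approximation. Fix $\epsilon>0$. By condition (4), there is $\eta>0$ such that $\mu(A)<\eta$ implies $\|\chi_A\|_E<\epsilon$. To see this, argue by contradiction: pick sets $A_i$ with $\mu(A_i)<2^{-i}$ and $\|\chi_{A_i}\|_E\ge\epsilon$. Then $\chi_{\bigcup_{j\ge i}A_j}\to0$ a.e. by Borel--Cantelli, while (B2) keeps $\|\chi_{\bigcup_{j\ge i}A_j}\|_E\ge\epsilon$, contradicting absolute continuity with respect to $1$.

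Next, for each $x$ equicontinuity gives a radius $\rho_x\in(0,1)$ such that every $A_rf$ oscillates by less than $\epsilon$ on $B(x,\rho_x)$. Apply the Vitali covering property (1) to the balls $B(x,\rho)$ with $\rho\le\rho_x$. This yields disjoint balls $B_1,B_2,\dots$ covering $X$ up to a null set. Since $\mu(X)<\infty$, finitely many of them, $B_1,\dots,B_N$, cover all of $X$ except a set $R$ with $\mu(R)<\eta$.

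Define $P:E(X)\to E(X)$ by replacing $g$ on each $B_i$ ($i\le N$) with its value at the centre, and setting $Pg=0$ on $R$. Then for every $f$ in the unit ball,
$$|A_rf-PA_rf|\le \epsilon\chi_X+C\chi_R ,$$
so (B2) gives
$$\|A_rf-PA_rf\|_E\le \epsilon\|\chi_X\|_E+C\epsilon .$$
Here $\|\chi_X\|_E<\infty$ by (B4). The set $\{PA_rf\}$ lies in an $N$-dimensional space spanned by $\chi_{B_1},\dots,\chi_{B_N}$ and is bounded by $C$, so it is totally bounded. Therefore the image of the unit ball is totally bounded, and $A_r$ is compact.

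The main obstacle is Step 2. First, one must extract from condition (4) the uniform smallness of $\|\chi_A\|_E$ for sets of small measure. Second, the Vitali covering must be arranged so that a finite subfamily exhausts $X$ up to small measure. Equicontinuity alone gives only pointwise radii $\rho_x$, which is exactly why the Vitali property is needed in place of compactness of $X$.
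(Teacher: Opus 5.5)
Your proof is correct. The paper does not prove this statement; it is quoted from \cite{Kos25}, so there is no in-paper argument to match it against.

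Checked on its own, every step goes through:
\begin{enumerate}
 \item Condition (2) together with (B5) for the set $X$ gives $|A_rf|\le C$ on the unit ball.
 \item Your two-term splitting plus condition (3) gives pointwise equicontinuity.
 \item Your contradiction argument correctly upgrades (4) to ``$\mu(A)<\eta\Rightarrow\|\chi_A\|_E<\epsilon$''. The sets $\bigcup_{j\ge i}A_j$ have finite measure and decrease to a null set, so (B2) and (B4) apply.
 \item The family $\{B(x,\rho):0<\rho\le\rho_x\}$ is admissible for the Vitali property, and $\sum_i\mu(B_i)\le\mu(X)<\infty$ lets you truncate to finitely many balls.
 \item (B1), (B2), (B4) and $A_r(E(X))\subset E(X)$ then give $\|A_rf-\sum_{i\le N}A_rf(x_i)\chi_{B_i}\|_E\le\epsilon(\|\chi_X\|_E+C)$.
\end{enumerate}

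Two cosmetic fixes. First, $P$ is not well defined as a map on $E(X)$, whose elements are a.e.-classes. Drop $P$ and use the approximant $f\mapsto\sum_{i\le N}A_rf(x_i)\chi_{B_i}$ directly; this is legitimate because $A_rf(x)$ is defined at every $x$ and depends only on the class of $f$. Second, choose $\rho_x$ so that the closed ball $B(x,\rho_x)$ sits inside the open neighbourhood given by equicontinuity. For each ball returned by the Vitali property, fix a centre $x_i$ and a radius $\le\rho_{x_i}$ from your family.

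Compared with what the paper does prove, yours is an Ascoli--Arzel\`a argument of the same kind as Theorem~\ref{suff.linf} and Proposition~\ref{av.rel.cpt.}. The difference is how you reduce to finitely many pieces.
\begin{itemize}
 \item In the paper, doubling plus boundedness gives total boundedness (Proposition~\ref{tot.bdd.}). A \emph{uniform} $\delta$ from $(\ast)_r$ then yields a finite $\delta$-net covering all of $X$.
 \item You need only pointwise equicontinuity and no doubling. The Vitali property and $\mu(X)<\infty$ give finitely many disjoint balls covering $X$ up to a set $R$ of small measure. The price is that $R$ must be absorbed in norm via (4).
\end{itemize}

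That price is exactly why the paper cannot use this theorem for its main results:
\begin{itemize}
 \item In $L^\infty(X)$, $\|\chi_R\|_\infty=1$ whenever $\mu(R)>0$. So (4) fails in general, and the full cover of Theorem~\ref{suff.linf} is needed.
 \item In $L^1(X)$, any admissible $\alpha$ in (B5) satisfies $\alpha(A)\ge1$ when $\mu(A)>0$. So the second half of (3) would force $\mu(B(x,r)\triangle B(y,r))=0$ for $y$ near $x$. This is why Theorem~\ref{suff.l1} instead goes through the Kolmogorov--Riesz-type criterion, Theorem~\ref{cpt.}.
\end{itemize}
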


\begin{thm}[Theorem~1.3 of \cite{Kos25}]\label{nec.}
Suppose that $A_r(E(X)) \subset E(X)$ and the following conditions hold:
\begin{enumerate}
 \item $X$ is $r$-doubling;
 \item $\sup\Big\{\Big\|\frac{\alpha(B(x,r))\chi_{B(x,2r)}}{\mu(B(x,r))}\Big\|_E \ \Big| \ x \in X\Big\} < \infty$.
\end{enumerate}
If $A_r$ is compact,
 then $X$ is bounded.
\end{thm}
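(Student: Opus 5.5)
We argue by contraposition: assuming that $X$ is unbounded, we will build a bounded sequence in $E(X)$ whose images under $A_r$ stay uniformly far apart, so that $(A_rf_n)$ has no Cauchy subsequence. Put
$$M = \sup\Big\{\Big\|\frac{\alpha(B(x,r))\chi_{B(x,2r)}}{\mu(B(x,r))}\Big\|_E \ \Big| \ x \in X\Big\} < \infty.$$

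First we choose well-separated centres. Since $X$ is unbounded, we pick inductively points $x_1, x_2, \dots \in X$ with $d(x_n,x_m) > 6r$ for $n \neq m$. Given $x_1,\dots,x_k$, take any $x_{k+1}$ with $d(x_1,x_{k+1}) > \max_{i \le k} d(x_1,x_i) + 6r$; such a point exists by unboundedness. Then the balls $B(x_n,3r)$ are pairwise disjoint.

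Next we define and estimate the test functions. Set
$$f_n = \frac{\chi_{B(x_n,2r)}}{\|\chi_{B(x_n,2r)}\|_E},$$
which is well defined by (B4), and $\|f_n\|_E = 1$. Since $f_n \ge 0$ vanishes outside $B(x_n,2r)$, the function $A_rf_n$ is nonnegative and supported in $B(x_n,3r)$. For $y \in B(x_n,r)$ we have $B(y,r) \subset B(x_n,2r)$, hence
$$A_rf_n(y) = \frac{1}{\|\chi_{B(x_n,2r)}\|_E}.$$
Applying (B5) with $A = B(x_n,r)$ gives
$$\frac{\mu(B(x_n,r))}{\|\chi_{B(x_n,2r)}\|_E} = \int_{B(x_n,r)} A_rf_n \, d\mu \le \alpha(B(x_n,r))\,\|A_rf_n\|_E .$$
Rearranging and using the homogeneity of the norm, this reads
$$\|A_rf_n\|_E \ge \frac{\mu(B(x_n,r))}{\alpha(B(x_n,r))\,\|\chi_{B(x_n,2r)}\|_E} \ge \frac{1}{M}.$$

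Finally we show that the images are separated. For $n \neq m$, the functions $A_rf_n$ and $A_rf_m$ are nonnegative with disjoint supports, so $|A_rf_n - A_rf_m| \ge A_rf_n \ge 0$. By (B1) and (B2),
$$\|A_rf_n - A_rf_m\|_E \ge \|A_rf_n\|_E \ge 1/M.$$
Thus $(A_rf_n)$ has no convergent subsequence although $(f_n)$ is bounded, contradicting compactness. Hence $X$ is bounded.

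The main point requiring care is the lower bound on $\|A_rf_n\|_E$. Taking the indicator of the larger ball $B(x_n,2r)$, rather than of $B(x_n,r)$, makes $A_rf_n$ exactly constant on $B(x_n,r)$, and this is what lets the integral bound (B5) be matched against hypothesis (2). The $r$-doubling hypothesis (1) plays no role in this estimate.
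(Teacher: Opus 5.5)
Your proof is correct, and it is the same construction the paper uses for its $L^1$ and $L^\infty$ analogues (Theorems~\ref{nec.l1} and \ref{nec.linf}; the paper says the former is proved by ``the same argument'' as this theorem): well-separated centres, the normalized indicator of $B(x_n,2r)$, $A_rf_n$ constant on $B(x_n,r)$ and zero off $B(x_n,3r)$, and a lower bound from integrating over $B(x_n,r)$; in those two cases your $f_n$ is exactly the paper's test function. Because you normalize by $\|\chi_{B(x_n,2r)}\|_E$, hypothesis (2) alone gives the separation constant, so you are right that the $r$-doubling hypothesis (1) is not needed here, which agrees with the paper's doubling-free Theorem~\ref{nec.linf}.
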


We say that a Banach space $E$ is \textit{reflexive} provided that the evaluation operator from $E$ to its bidual $E^{\ast\ast}$ is a surjective linear isometry, see \cite[Definition~3.110]{FHHMZ}.
Let $E'(X) = (E'(X),\|\cdot\|_{E'})$ be the \textit{associate} of $E(X)$,
 that is
 $$E'(X) = \{g \in L^0(X) \mid \|g\|_{E'} < \infty\},$$
 where we set
 $$\|g\|_{E'} = \sup\bigg\{\int_X |fg| d\mu \ \bigg| \ f \in E(X) \text{ and } \|f\|_E \leq 1\bigg\}.$$
The associate is also a Banach function space (cf.~\cite[Theorem~3.1.6]{EE}).
For every $f \in E(X)$ and every $g \in E'(X)$, we have the following:
 $$\int_X |fg| d\mu \leq \|f\|_E\|g\|_{E'},$$
 which is the H\"{o}lder inequality, see Theorem~3.1.8 of \cite{EE}.
By virtue of this inequality, we can choose a function $\alpha$ satisfying the condition (B5) so that $\alpha(A) = \|\chi_A\|_{E'}$ for each measurable set $A \subset X$ of finite measure.
It is known that the reflexivity of Banach function spaces is characterized as follows, refer to \cite[Theorem~3.1.15]{EE}

\begin{prop}\label{reflex.abs.conti.}
A Banach function space $E(X)$ is reflexive if and only if the both norms $\|\cdot\|_E$ and $\|\cdot\|_{E'}$ are absolutely continuous.
\end{prop}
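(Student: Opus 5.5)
The plan is to derive the proposition from two standard facts of Banach function space theory, both available in \cite{EE}. The first is the Lorentz--Luxemburg theorem: thanks to the Fatou property (B3), $E''(X)=E(X)$ with equal norms, so that $\|f\|_E=\sup\{\int_X|fg|\,d\mu \mid \|g\|_{E'}\le 1\}$. The second is the duality statement: if $\|\cdot\|_E$ is absolutely continuous, then the map $g\mapsto(f\mapsto\int_X fg\,d\mu)$ is an isometric isomorphism of $E'(X)$ onto $E(X)^\ast$. This second fact is proved by restricting a functional to $\chi_A$ for sets $A$ of finite measure. Absolute continuity makes $A\mapsto\Phi(\chi_A)$ countably additive and absolutely continuous with respect to $\mu$, so the Radon--Nikodym theorem produces the density $g$. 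This step uses the $\sigma$-finiteness implicit in the framework of \cite{EE}. Absolute continuity is used again to approximate general $f$ by simple functions.

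\emph{Sufficiency.} Assume both norms are absolutely continuous. Applying the duality statement to $E$ gives $E^\ast\cong E'$, and applying it to $E'$ gives $(E')^\ast\cong E''=E$. Composing these identifications, $E^{\ast\ast}\cong(E')^\ast\cong E$. The main check is that this composite is exactly the canonical evaluation map, and not merely some isometry. This is verified by unwinding the definitions: for $f\in E$ and $g\in E'$ both sides act on $g$ by $\int_X fg\,d\mu$. Hence $E(X)$ is reflexive.

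\emph{Necessity.} Assume $E(X)$ is reflexive. I will show that any Banach function space whose norm is not absolutely continuous contains an isomorphic copy of $\ell^\infty$, which is impossible in a reflexive space. So suppose $f\in E$, $\chi_{A_i}\to 0$ a.e., and $\|f\chi_{A_i}\|_E\ge\epsilon$ along a subsequence. Put $B_i=\bigcup_{j\ge i}A_j$. The sets $B_i$ decrease, their intersection is null, and $\|f\chi_{B_i}\|_E\ge\epsilon$ by (B2). For fixed $i$, $|f|\chi_{B_i\setminus B_k}\uparrow|f|\chi_{B_i}$ a.e. as $k\to\infty$, so (B3) gives some $k>i$ with $\|f\chi_{B_i\setminus B_k}\|_E\ge\epsilon/2$. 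Iterating this choice yields $i_1<i_2<\cdots$ and pairwise disjoint sets $D_n=B_{i_n}\setminus B_{i_{n+1}}$ with $\|f\chi_{D_n}\|_E\ge\epsilon/2$. Define $T(a)=\sum_n a_n f\chi_{D_n}$ for $a=(a_n)\in\ell^\infty$; the sum converges pointwise because the $D_n$ are disjoint. By (B2) and (B1),
\[
\tfrac{\epsilon}{2}\|a\|_\infty\le\|T(a)\|_E\le\|a\|_\infty\|f\|_E ,
\]
the upper bound from $|T(a)|\le\|a\|_\infty|f|$ and the lower bound from $|T(a)|\ge|a_n||f|\chi_{D_n}$ for each $n$. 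Thus $\ell^\infty$ embeds isomorphically into $E$, contradicting reflexivity. Hence $\|\cdot\|_E$ is absolutely continuous.

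For $\|\cdot\|_{E'}$, note that $E'$ embeds isometrically as a closed subspace of $E^\ast$ via $g\mapsto\int_X\cdot\, g\,d\mu$; the isometry is exactly the norm formula $\|g\|_{E'}=\sup_{\|f\|_E\le 1}\int_X|fg|\,d\mu$. Since $E^\ast$ is reflexive whenever $E$ is, so is its closed subspace $E'$. Because $E'$ is itself a Banach function space, the same $\ell^\infty$ argument applies to it, and $\|\cdot\|_{E'}$ is absolutely continuous as well.

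I expect the main obstacle to be the duality statement $E^\ast\cong E'$, specifically the measure-theoretic Radon--Nikodym step. Everything else is routine lattice manipulation with (B1)--(B3). If a self-contained argument is not required, I would simply cite \cite[Theorem~3.1.15]{EE} for this step.
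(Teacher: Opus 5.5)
The paper gives no argument of its own here. It simply quotes \cite[Theorem~3.1.15]{EE}, which is also your fallback. Your proposal is therefore an unpacking of the cited result, and as such it is correct.

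\textbf{Sufficiency.} This half is the standard argument, $E^{\ast\ast}\cong(E')^\ast\cong E''=E$. You rightly check that the composite is the canonical embedding and not just some isometry. The $\sigma$-finiteness you need for the Radon--Nikodym step is automatic in this paper's setting: $X=\bigcup_{n\in\N}B(x_0,n)$ and every closed ball has finite measure.

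\textbf{Necessity.} Here you depart from the usual route, which reuses the duality theorem in its converse form. In that route, $E'$ is a closed subspace of $E^\ast$ that separates the points of $E$. If $E$ is reflexive, a nonzero element of $E^{\ast\ast}=E$ annihilating $E'$ would be a nonzero $f$ with $\int_X fg\,d\mu=0$ for all $g\in E'$, which is impossible, so $E'=E^\ast$. The ``only if'' half of the duality theorem then forces $\|\cdot\|_E$ to be absolutely continuous, and one repeats the argument for the reflexive space $E'$. Your $\ell^\infty$-embedding instead uses only (B1)--(B3) and solidity, and it avoids that converse duality statement. It also gives a stronger structural fact: a Banach function space whose norm is not absolutely continuous contains an isomorphic copy of $\ell^\infty$. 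The disjointification via $B_i=\bigcup_{j\ge i}A_j$ and (B3) is correct. The same argument applies to $E'$, since $E'$ is isometric to a closed subspace of the reflexive space $E^\ast$.

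\textbf{Which definition you are using.} Your proof depends on the definition of Banach function space in \cite{EE}. There $E(X)=\{f\mid\rho(|f|)<\infty\}$ for a function norm $\rho$ defined on all nonnegative measurable functions. This makes $E(X)$ solid and gives the Fatou property for arbitrary measurable limits. You use solidity to get $f\chi_{B_i}\in E$ and $T(a)\in E$, and the strong Fatou property to get $E''=E$.

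The paper's (B2) and (B3), read literally, only quantify over functions already in $E(X)$. Under that reading, attributing $E''=E$ to ``(B3)'' is not justified, and the proposition itself fails. Take $c_0$ on $\N$ with counting measure and the usual metric:
\begin{itemize}
\item $c_0$ satisfies (B1)--(B5) as written;
\item $c_0'=\ell^1$;
\item both $\|\cdot\|_\infty$ on $c_0$ and $\|\cdot\|_1$ are absolutely continuous;
\item yet $c_0$ is not reflexive.
\end{itemize}
The failure is exactly at $c_0''=\ell^\infty\neq c_0$, the Lorentz--Luxemburg step. So you should state explicitly that you work with the \cite{EE} definition, which is also what the cited theorem presupposes.
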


Lebesgue spaces $L^p(X)$ are reflexive Banach function spaces for all $p \in (1,\infty)$,
 but the space $L^1(X)$ is not reflexive since the norm $\|\cdot\|_\infty$ of its associate $L^\infty(X)$ is not absolutely continuous.
Combining Theorem~\ref{suff.} with Proposition~\ref{reflex.abs.conti.}, we can establish the compactness theorem of averaging operators on reflexive Banach function spaces as follows:

\begin{thm}\label{reflex.}
Let $E(X)$ be a reflexive Banach function space.
Suppose that $A_r(E(X)) \subset E(X)$ and the following conditions are satisfied:
\begin{enumerate}
 \item $X$ satisfies the Vitali covering property;
 \item $\inf\{\mu(B(x,r)) \mid x \in X\} > 0$;
 \item for each $x \in X$, $\mu(B(x,r) \triangle B(y,r)) \to 0$ as $y \to x$.\footnote{Remark that if $\mu(B(x,r) \triangle B(y,r)) \to 0$ as $y \to x$,
 then $\mu(B(y,r)) \to \mu(B(x,r))$, refer to the proof of Lemma~\ref{conti.}.}
\end{enumerate}
If $\mu(X) < \infty$,
 then $A_r : E(X) \to E(X)$ is compact.
\end{thm}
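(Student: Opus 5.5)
We will derive Theorem~\ref{reflex.} directly from Theorem~\ref{suff.}: the hypotheses (1), (2) and the assumption $\mu(X) < \infty$ carry over verbatim. So we must check, for a reflexive $E(X)$, conditions (3) and (4) of Theorem~\ref{suff.}, taking $\alpha(A) = \|\chi_A\|_{E'}$ as permitted by the H\"older inequality.

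First, condition (4). By Proposition~\ref{reflex.abs.conti.}, reflexivity makes $\|\cdot\|_E$ absolutely continuous. Since $\mu(X) < \infty$, (B4) gives $1 = \chi_X \in E(X)$. Hence $\|\chi_{A_i}\|_E = \|1\cdot\chi_{A_i}\|_E \to 0$ whenever $\chi_{A_i} \to 0$ a.e., which is absolute continuity with respect to $1$.

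Next, condition (3). The convergence $\mu(B(y,r)) \to \mu(B(x,r))$ follows from $|\mu(B(y,r)) - \mu(B(x,r))| \leq \mu(B(x,r) \triangle B(y,r))$, valid since both balls have finite measure. For $\alpha(B(x,r)\triangle B(y,r)) = \|\chi_{B(x,r)\triangle B(y,r)}\|_{E'} \to 0$ we use reflexivity again: Proposition~\ref{reflex.abs.conti.} says $\|\cdot\|_{E'}$ is absolutely continuous. Argue by contradiction. Suppose there are $y_i \to x$ and $\epsilon > 0$ with $\|\chi_{D_i}\|_{E'} \geq \epsilon$, where $D_i = B(x,r)\triangle B(y_i,r)$. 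By hypothesis (3), $\mu(D_i) \to 0$, so after passing to a subsequence $\sum_i \mu(D_i) < \infty$. Borel--Cantelli then gives $\chi_{D_i} \to 0$ a.e. Moreover $D_i \subset B(x,2r)$ once $d(x,y_i) \leq r$. By (B4) applied to $E'(X)$, which is a Banach function space, $g = \chi_{B(x,2r)} \in E'(X)$. Absolute continuity then yields $\|\chi_{D_i}\|_{E'} = \|g\chi_{D_i}\|_{E'} \to 0$, a contradiction. Since sequential convergence suffices in a metric space, condition (3) holds. Theorem~\ref{suff.} then gives compactness of $A_r$.

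The main obstacle is this last step: absolute continuity is phrased via a.e.\ convergence of characteristic functions, while hypothesis (3) only gives convergence in measure. The subsequence plus Borel--Cantelli argument handles this, together with the observation that all the $D_i$ sit inside a fixed finite-measure ball, so that the dominating function $\chi_{B(x,2r)}$ lies in $E'(X)$.
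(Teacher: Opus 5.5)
Your proposal is correct and is the paper's route: the paper obtains this theorem by combining Theorem~\ref{suff.} with Proposition~\ref{reflex.abs.conti.}, taking $\alpha(A)=\|\chi_A\|_{E'}$, and gives no further proof. You supply the details it leaves implicit, in particular the subsequence and Borel--Cantelli step that turns $\mu(B(x,r)\triangle B(y,r))\to 0$ into a.e.\ convergence so that absolute continuity of $\|\cdot\|_{E'}$ applies.
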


As a corollary of Theorems~\ref{nec.} and \ref{reflex.}, we can obtain the following theorem on the compactness of averaging operators on Lesgue spaces.

\begin{cor}[Corollary~1.4 of \cite{Kos25}]\label{equiv.lorentz}
Let $X$ be a metric measure space having the Vitali covering property and the $s$-doubling for all $s \geq r$ such that for each point $x \in X$, $\mu(B(x,r) \triangle B(y,r)) \to 0$ as $y \to x$,
 and let $p \in (1,\infty)$.
Then $A_r : L^p(X) \to L^p(X)$ is compact if and only if $X$ is bounded.
\end{cor}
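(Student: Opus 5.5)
The corollary will follow by splitting into the two implications and feeding $E(X)=L^p(X)$, $p\in(1,\infty)$, into Theorems~\ref{nec.} and \ref{reflex.}. The relevant facts about $L^p(X)$ are that it is a reflexive Banach function space and that its associate is $L^{p'}(X)$ with $1/p+1/p'=1$. Hence we may take $\alpha(A)=\|\chi_A\|_{p'}=\mu(A)^{1/p'}$. First I check that $A_r(L^p(X))\subset L^p(X)$. By H\"older/Jensen, $|A_rf(x)|^p\le A_r(|f|^p)(x)$. Integrating and using Fubini gives
$$\int_X |A_rf|^p\,d\mu\le\int_X|f(y)|^p\int_X\frac{\chi_{B(y,r)}(x)}{\mu(B(x,r))}\,d\mu(x)\,d\mu(y).$$
For $x\in B(y,r)$ we have $B(y,r)\subset B(x,2r)$. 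The $r$-doubling property then gives $\mu(B(y,r))\le\gamma\mu(B(x,r))$, so the inner integral is at most $\gamma$. Thus $A_r$ is bounded on $L^p$.

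\emph{Compact $\Rightarrow$ bounded.} I apply Theorem~\ref{nec.}. Condition (1) holds because $X$ is $r$-doubling. For condition (2) we compute
$$\Big\|\frac{\alpha(B(x,r))\chi_{B(x,2r)}}{\mu(B(x,r))}\Big\|_p=\frac{\mu(B(x,r))^{1/p'}\mu(B(x,2r))^{1/p}}{\mu(B(x,r))}\le\gamma^{1/p}.$$
This bound is uniform in $x$, so Theorem~\ref{nec.} yields that $X$ is bounded.

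\emph{Bounded $\Rightarrow$ compact.} I apply Theorem~\ref{reflex.}, which needs four things: (1) the Vitali covering property, (3) continuity of the symmetric difference, $\mu(X)<\infty$, and (2) $\inf_x\mu(B(x,r))>0$. Conditions (1) and (3) are hypotheses of the corollary. For the remaining two, fix $x_0$ and let $R$ be such that $X=B(x_0,R)$. Then $\mu(X)=\mu(B(x_0,R))<\infty$, since closed balls have finite measure. This step is the main point: the doubling property is assumed only for radii $s\ge r$, so the lower bound must be obtained by iterating from radius $r$ upward. For any $x$ we have $B(x_0,R)\subset B(x,2R)\subset B(x,2^k r)$ for $k$ with $2^k r\ge 2R$. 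Iterating doubling at the radii $r,2r,\dots,2^{k-1}r$, all of which are $\ge r$, gives
$$\mu(X)\le\mu(B(x,2^kr))\le\gamma'\mu(B(x,r)),$$
where $\gamma'$ is the product of the doubling constants at those radii. Hence $\mu(B(x,r))\ge\mu(X)/\gamma'\ge\mu(B(x_0,r))/\gamma'>0$, uniformly in $x$. Theorem~\ref{reflex.} then gives compactness.
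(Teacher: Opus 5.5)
Your proposal is correct and follows the route the paper indicates: the ``only if'' part comes from Theorem~\ref{nec.} and the ``if'' part from Theorem~\ref{reflex.}, both applied to $E(X)=L^p(X)$ with $\alpha(A)=\mu(A)^{1/p'}$. The paper leaves the verifications implicit, and yours are all sound: the boundedness of $A_r$ on $L^p(X)$ via $r$-doubling, the uniform bound $\gamma^{1/p}$ in condition (2), and the lower bound $\inf_x\mu(B(x,r))>0$ obtained by iterating doubling only at radii $\geq r$ (Proposition~\ref{tot.bdd.} would need doubling at all radii).
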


\section{Compact subsets in $L^p(X)$}

In the papers \cite{GM,Kos21}, generalizations of the Kolmogorov-Riesz theorem \cite{Kol,R} are given by using averaging operators.
By the similar argument, the ``if'' part of \cite[Theorem~1.3]{Kos21} will be established under the assumption that $X$ has the $s$-doubling property and the property $(\star)_s$ for any $s > 0$.

\begin{thm}\label{cpt.}
Suppose that $X$ has the $s$-doubling property and the property $(\star)_s$ for every $s > 0$.
Then a bounded subset $F \subset L^p(X)$ is relatively compact if the following are satisfied:
\begin{enumerate}
 \item for every $\epsilon > 0$, there is $\delta > 0$ such that $\|A_\delta f - f\|_p < \epsilon$ for each $f \in F$.
 \item for each $\epsilon > 0$, there exists a bounded subset $E$ of $X$ such that $\|f\chi_{X \setminus E}\|_p < \epsilon$ for any $f \in F$.
\end{enumerate}
\end{thm}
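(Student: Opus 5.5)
Since $L^p(X)$ is complete, the plan is to show that $F$ is totally bounded: for each $\epsilon>0$ we produce a finite $\epsilon$-net. Fix $\epsilon>0$. Condition (2) gives a bounded set $E$ with $\|f\chi_{X\setminus E}\|_p<\epsilon$ for all $f\in F$, and condition (1) gives $\delta>0$ with $\|A_\delta f-f\|_p<\epsilon$ for all $f\in F$. Then
\[
\|f-(A_\delta f)\chi_{E}\|_p\le \|f-A_\delta f\|_p+\|(A_\delta f)\chi_{X\setminus E}\|_p,
\]
and the last term is at most $\|(A_\delta f - f)\chi_{X\setminus E}\|_p+\|f\chi_{X\setminus E}\|_p<2\epsilon$. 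So it suffices to show that the family $\mathcal{G}=\{(A_\delta f)\chi_E \mid f\in F\}$ is totally bounded in $L^p(X)$.

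\textbf{Uniform bounds on $E$.} Since $E$ is bounded, we have $E\subset B(x_0,R)$ for some point $x_0$ and some $R>0$. Iterating the $s$-doubling property from the scale $\delta$ up to $2R$, while noting that $B(x_0,R)\subset B(x,2R)$ for $x\in E$, gives $\mu(B(x,\delta))\ge c>0$ uniformly in $x\in E$. Also $\mu(E)\le\mu(B(x_0,R))<\infty$. By H\"older's inequality,
\[
|A_\delta f(x)|\le \mu(B(x,\delta))^{-1/p}\|f\|_p\le c^{-1/p}\sup_{g\in F}\|g\|_p ,
\]
so $\mathcal{G}$ is uniformly bounded in sup norm on $E$.

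\textbf{Equicontinuity.} Next I would show that $\{A_\delta f|_E\}$ is uniformly equicontinuous. For $x,y\in E$,
\[
|A_\delta f(x)-A_\delta f(y)|\le \frac{1}{\mu(B(x,\delta))}\int_{B(x,\delta)\triangle B(y,\delta)}|f|\,d\mu+\Big|\frac{1}{\mu(B(x,\delta))}-\frac{1}{\mu(B(y,\delta))}\Big|\int_{B(y,\delta)}|f|\,d\mu .
\]
Both terms are controlled, via H\"older and the lower bound $c$, by $\mu(B(x,\delta)\triangle B(y,\delta))$. The key point is that $(\star)_\delta$ forces this quantity to be small once $d(x,y)$ is small: if $d(x,y)<\eta$, then
\[
B(x,\delta)\triangle B(y,\delta)\subset B(x,\delta+\eta)\setminus B(x,\delta-\eta),
\]
by the triangle inequality. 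For $p=\infty$ we use the plain $L^1$ bound instead of H\"older, and the same estimate works.

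\textbf{Finite net.} This is the main obstacle, because $E$ need not be totally bounded in a general metric space. Here doubling rescues us: since $X$ is $s$-doubling for every $s$, a maximal $\eta$-separated subset of $E$ is finite. Indeed, the balls $B(z,\eta/2)$ centered at such points are disjoint, and each has measure bounded below uniformly, again by iterated doubling from the scale $\eta/2$ up to radius about $2R$. Their total measure is at most $\mu(B(x_0,R+\eta))<\infty$, which bounds the number of points. So $E$ is totally bounded.

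Choose $\eta$ from the equicontinuity step so that the oscillation of $A_\delta f$ is below $\epsilon/\mu(E)^{1/p}$ across distances less than $\eta$. Cover $E$ by finitely many measurable pieces $E_1,\dots,E_m$, each of diameter less than $\eta$. Approximate each $A_\delta f$ on $E$ by the simple function taking a sampled value on each $E_j$; this gives $L^p$ error at most $\epsilon$. These simple functions form a bounded subset of a finite-dimensional space, hence a totally bounded set. Therefore $\mathcal{G}$ is totally bounded, and so is $F$.
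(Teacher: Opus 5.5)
\textbf{Overall.} Your architecture matches the paper's: use (1) and (2) to reduce to $\{(A_\delta f)\chi_E\}$, then run an Arzel\`a--Ascoli argument from uniform boundedness, equicontinuity on $E$, and total boundedness of $E$ obtained from doubling. For $p>1$ (and also $p=\infty$) your steps go through. There is, however, a genuine gap at $p=1$, and that is exactly the case the paper needs for Theorem~\ref{suff.l1}.

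\textbf{The gap at $p=1$.} In your equicontinuity step you claim that $\mu(B(x,\delta))^{-1}\int_{B(x,\delta)\triangle B(y,\delta)}|f|\,d\mu$ is controlled by $\mu(B(x,\delta)\triangle B(y,\delta))$ via H\"older. This is false when $p=1$. The conjugate exponent is then $q=\infty$, so H\"older only gives the bound $\|f\|_1$, with no smallness. A merely bounded subset of $L^1$ need not be uniformly integrable, and $\{A_\delta f\}$ can genuinely fail to be equicontinuous. For example, on $\R$ with Lebesgue measure, $f_n=n\chi_{[1,1+1/n]}$ has $\|f_n\|_1=1$, yet $A_1f_n(0)=0$ and $A_1f_n(1/n)=1/2$. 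In your argument, condition (1) is used only to choose $\delta$, so nothing rules this behavior out.

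\textbf{The fix.} Invoke (1) a second time, at a new scale, to obtain the needed uniform integrability; this is what the paper does in Lemma~\ref{equiconti.}. Choose $\sigma>0$ with $\|A_\sigma f-f\|_1$ small for all $f\in F$, and estimate
\[
\int_{B(x,\delta)\triangle B(y,\delta)}|f|\,d\mu\le\|f-A_\sigma f\|_1+\mu\bigl(B(x,\delta)\triangle B(y,\delta)\bigr)\,\sup_{z}|A_\sigma f(z)| .
\]
Here $z$ ranges over the bounded set $\bigcup_{x\in E}B(x,\delta)$, and $|A_\sigma f(z)|\le\|f\|_1/\mu(B(z,\sigma))$. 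Your iterated-doubling lower bound on ball measures makes this bounded uniformly in $f\in F$ and in $z$. With this in place, the rest of your argument works as written: the annulus inclusion from $(\star)_\delta$ makes $\mu(B(x,\delta)\triangle B(y,\delta))$ small, then the finite net for $E$ from doubling and the simple-function approximation finish the proof.
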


The doubling property plays a key role in this paper.
We can characterize the total boundedness of metric measure spaces by this property, see \cite[Proposition~3.2]{Kos25} and \cite[Proposition~2.2]{GS}.
By virtue of the same argument, the following proposition can be obtained.

\begin{prop}\label{tot.bdd.}
Let $X$ be a metric measure space and $E \subset X$ be a bounded subset of $X$.
The following are equivalent:
\begin{enumerate}
\renewcommand{\labelenumi}{(\roman{enumi})}
 \item For each $s > 0$, there exists a finite subset $F \subset E$ such that $E \subset \bigcup_{x \in F} B(x,s)$;
 \item For any $s > 0$, $\inf\{B(x,s) \mid x \in E\} > 0$;
 \item For every $s > 0$, there is $\gamma > 0$ such that $\mu(B(x,2s)) \leq \gamma\mu(B(x,s))$ for all $x \in E$.
\end{enumerate}
\end{prop}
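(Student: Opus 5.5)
We prove the cycle (i) $\Rightarrow$ (ii) $\Rightarrow$ (iii) $\Rightarrow$ (i), reading (ii) as $\inf\{\mu(B(x,s)) \mid x \in E\} > 0$. Throughout we use the standing assumptions that every closed ball has positive and finite measure and that $E$ is bounded. Fix $x_0 \in X$ and $R > 0$ with $E \subset B(x_0,R)$.

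\emph{(i) $\Rightarrow$ (ii).} Fix $s > 0$ and use (i) with radius $s/2$ to get a finite set $F \subset E$ with $E \subset \bigcup_{z \in F} B(z,s/2)$. For $x \in E$ choose $z \in F$ with $d(x,z) \leq s/2$; the triangle inequality gives $B(z,s/2) \subset B(x,s)$. Hence
$$\mu(B(x,s)) \geq \min_{z \in F}\mu(B(z,s/2)) > 0,$$
and this is a minimum over finitely many positive numbers.

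\emph{(ii) $\Rightarrow$ (iii).} Fix $s > 0$. For $x \in E$ we have $B(x,2s) \subset B(x_0,R+2s)$, and the latter ball has finite measure $M$. Let $m = \inf\{\mu(B(x,s)) \mid x \in E\}$, which is positive by (ii). Then
$$\mu(B(x,2s)) \leq M \leq (M/m)\,\mu(B(x,s)),$$
so $\gamma = \max\{1, M/m\}$ works.

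\emph{(iii) $\Rightarrow$ (i).} This is the main step; it is a packing argument. Fix $s > 0$ and choose a maximal $s$-separated subset $S \subset E$, i.e.\ distinct points of $S$ are at distance greater than $s$. Such a set exists by Zorn's lemma. By maximality, $E \subset \bigcup_{x \in S} B(x,s)$, so it suffices to show that $S$ is finite.

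The balls $B(x,s/3)$, $x \in S$, are pairwise disjoint and all lie in $B(x_0,R+s)$. Now bound each $\mu(B(x,s/3))$ from below uniformly. For $x \in E$ we have $B(x_0,R) \subset B(x,2R)$. Pick $k \in \N$ with $2^k s/3 \geq 2R$ and iterate (iii) at the radii $s/3, 2s/3, \dots, 2^{k-1}s/3$; all these applications are at centres $x \in E$. This gives
$$\mu(B(x_0,R)) \leq \mu(B(x,2^k s/3)) \leq \Gamma\,\mu(B(x,s/3)),$$
where $\Gamma$ is the product of the constants $\gamma$ obtained for those radii. Since $0 < \mu(B(x_0,R))$, each ball $B(x,s/3)$ with $x \in S$ has measure at least $c = \mu(B(x_0,R))/\Gamma > 0$.

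Disjointness then gives, for any finite $S' \subset S$,
$$\#S' \cdot c \leq \mu(B(x_0,R+s)) < \infty.$$
Hence $\#S \leq \mu(B(x_0,R+s))/c$, so $S$ is finite and (i) holds. The one point needing care is that each doubling step applies (iii) only at centres in $E$ and at radii for which (iii) is available. This is satisfied because (iii) is assumed for every $s > 0$ and all the centres lie in $S \subset E$.
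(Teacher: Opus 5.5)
Your proof is correct. The paper writes out no proof of its own; it points to \cite[Proposition~3.2]{Kos25} and \cite[Proposition~2.2]{GS}, and your argument is the standard one behind those references: a finite cover at radius $s/2$ for (i)$\Rightarrow$(ii), comparison with the fixed finite-measure ball $B(x_0,R+2s)$ for (ii)$\Rightarrow$(iii), and for (iii)$\Rightarrow$(i) a packing of the disjoint balls $B(x,s/3)$, whose measures you bound below by iterating (iii) at a fixed centre in $E$ (reading the typo in (ii) as $\inf\{\mu(B(x,s)) \mid x \in E\} > 0$ is the intended meaning).
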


On the conditions $(\star)_s$ and $(\ast)_s$, we have the following implications.

\begin{lem}\label{conti.}
Fix any $s > 0$ and suppose that $X$ satisfies the condition $(\star)_s$.
Then $X$ also satisfies $(\ast)_s$.
Additionaly, if $\inf\{\mu(B(z,s)) \mid z \in X\} > 0$,
 then for each $\epsilon > 0$, there exists $\delta \in (0,s)$ such that if $d(x,y) < \delta$,
 then $|1/\mu(B(x,s)) - 1/\mu(B(y,s))| < \epsilon$.
\end{lem}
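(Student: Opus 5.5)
The plan is to prove the two assertions in order, the second resting on the first.

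For the first assertion, fix $\epsilon > 0$ and take $\delta \in (0,s)$ from $(\star)_s$ with $\mu(B(z,s+\delta) \setminus B(z,s-\delta)) < \epsilon/2$ for every $z \in X$. Suppose $d(x,y) < \delta$. The triangle inequality gives two inclusions. If $w \in B(x,s) \setminus B(y,s)$, then $d(w,y) > s$ and $d(w,y) \leq d(w,x) + d(x,y) < s + \delta$, so $w \in B(y,s+\delta) \setminus B(y,s-\delta)$. Symmetrically, $B(y,s) \setminus B(x,s) \subset B(x,s+\delta) \setminus B(x,s-\delta)$. Hence
$$\mu(B(x,s) \triangle B(y,s)) \leq \mu(B(y,s+\delta)\setminus B(y,s-\delta)) + \mu(B(x,s+\delta)\setminus B(x,s-\delta)) < \epsilon,$$
which is $(\ast)_s$. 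I will also use that the balls have finite measure, so
$$|\mu(B(x,s)) - \mu(B(y,s))| = |\mu(B(x,s)\setminus B(y,s)) - \mu(B(y,s)\setminus B(x,s))| \leq \mu(B(x,s)\triangle B(y,s)).$$
This also justifies the footnote to Theorem~\ref{reflex.}.

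For the second assertion, set $m = \inf\{\mu(B(z,s)) \mid z \in X\} > 0$. Then
$$\left|\frac{1}{\mu(B(x,s))} - \frac{1}{\mu(B(y,s))}\right| = \frac{|\mu(B(x,s)) - \mu(B(y,s))|}{\mu(B(x,s))\mu(B(y,s))} \leq \frac{\mu(B(x,s)\triangle B(y,s))}{m^2}.$$
Given $\epsilon > 0$, apply the first part with $m^2\epsilon$ in place of $\epsilon$. This yields $\delta$ with the required property. The $\delta$ obtained from $(\star)_s$ already lies in $(0,s)$, so the constraint $\delta \in (0,s)$ is automatic.

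I expect no step to be a genuine obstacle. The only points needing care are the strict versus non-strict inequalities in the annulus inclusions, which hold with room to spare since $d(x,y) < \delta$ strictly, and the finiteness of $\mu$ on balls when subtracting measures.
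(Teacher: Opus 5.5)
Your proposal is correct and follows the paper's proof essentially step for step: you take $\delta\in(0,s)$ from $(\star)_s$ with $\epsilon/2$, bound $\mu(B(x,s)\triangle B(y,s))$ by the two annuli, and apply the first part with $m^2\epsilon$ to control $|1/\mu(B(x,s))-1/\mu(B(y,s))|$. You also spell out the annulus inclusions and keep the final inequality strict, both of which the paper leaves implicit (it writes $\le \epsilon$ at the end).
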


\begin{proof}
Fix any positive number $\epsilon > 0$.
By the condition $(\star)_s$, we can find $\delta \in (0,s)$ so that $\mu(B(z,s + \delta) \setminus B(z,s - \delta)) < \epsilon/2$ for every $z \in X$.
Then for all points $x, y \in X$, if $d(x,y) < \delta$,
 $$\mu(B(x,s) \triangle B(y,s)) \leq \mu(B(x,s + \delta) \setminus B(x,s - \delta)) + \mu(B(y,s + \delta) \setminus B(y,s - \delta)) < \epsilon.$$
The former is shown.

According to the assumption of the latter, put $a = \inf\{\mu(B(z,s)) \mid z \in X\}$.
In the above argument, we may choose $\delta \in (0,s)$ so that if $d(x,y) < \delta$, $\mu(B(x,s) \triangle B(y,s)) < a^2\epsilon$.
Then
\begin{align*}
 |\mu(B(x,s)) - \mu(B(y,s))| &= |\mu(B(x,s) \setminus B(y,s)) - \mu(B(y,s) \setminus B(x,s))|\\
 &\leq \mu(B(x,s) \setminus B(y,s)) + \mu(B(y,s) \setminus B(x,s))\\
 &= \mu(B(x,s) \triangle B(y,s)) < a^2\epsilon.
\end{align*}
Therefore we get that
$$\bigg|\frac{1}{\mu(B(x,s))} - \frac{1}{\mu(B(y,s))}\bigg| = \frac{|\mu(B(x,s)) - \mu(B(y,s))|}{\mu(B(x,s)) \cdot \mu(B(y,s))} \leq \frac{a^2\epsilon}{a^2} = \epsilon.$$
We complete the latter.
\end{proof}

Let any $t \in (0,\infty)$.
Recall that for every function $f \in L^0(X)$ and for all points $x, y \in X$,
\begin{align*}
 |A_tf(x) - A_tf(y)| &= \bigg|\frac{1}{\mu(B(x,t))}\int_{B(x,t)} f(z) d\mu(z) - \frac{1}{\mu(B(y,t))}\int_{B(y,t)} f(z) d\mu(z)\bigg|\\
 &\leq \bigg|\frac{1}{\mu(B(x,t))} - \frac{1}{\mu(B(y,t))}\bigg|\bigg|\int_{B(x,t)} f(z) d\mu(z)\bigg|\\
 &\ \ \ \ \ \ \ \ + \frac{1}{\mu(B(y,t))}\bigg|\int_{B(x,t)} f(z) d\mu(z) - \int_{B(y,t)} f(z) d\mu(z)\bigg|\\
 &\leq \bigg|\frac{1}{\mu(B(x,t))} - \frac{1}{\mu(B(y,t))}\bigg|\bigg(\int_{B(x,t)} |f(z)| d\mu(z)\bigg)\\
 &\ \ \ \ \ \ \ \ + \frac{1}{\mu(B(y,t))}\int_{B(x,t) \triangle B(y,t)} |f(z)| d\mu(z).
\end{align*}
We show the following lemma,
 which is an analogue of \cite[Proposition~2.4]{Kos21}.

\begin{lem}\label{equiconti.}
Fix any positive number $t > 0$.
Suppose that $X$ is $s$-doubling for all $s > 0$ and satisfies the property $(\ast)_t$,
 and that $E$ is bounded in $X$.
Let $F$ be a bounded subset of $L^p(X)$,
 where $p \in [1,\infty)$.
If $p > 1$,
 then the following holds.
\begin{itemize}
 \item For each $\epsilon > 0$, there exists $\delta > 0$ such that for all $x, y \in E$, if $d(x,y) < \delta$,
 then $|A_tf(x) - A_tf(y)| < \epsilon$ for any $f \in F$.
\end{itemize}
Additionally, if for each $\lambda > 0$, there is $\sigma > 0$ such that for any $f \in F$, $\|A_\sigma f - f\|_1 < \lambda$,
 then the above holds if $p = 1$.
\end{lem}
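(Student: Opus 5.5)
The plan is to start from the pointwise estimate displayed just before the statement:
\[
|A_tf(x) - A_tf(y)| \leq \Big|\tfrac{1}{\mu(B(x,t))} - \tfrac{1}{\mu(B(y,t))}\Big|\int_{B(x,t)}|f|\,d\mu + \tfrac{1}{\mu(B(y,t))}\int_{B(x,t)\triangle B(y,t)}|f|\,d\mu .
\]
Each term will be controlled uniformly over $f \in F$ and $x,y \in E$.

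First, fix a bounded set $E' \supset E$, for instance the closed $t$-neighbourhood of $E$, which is again bounded. By the doubling hypothesis for all $s>0$ and Proposition~\ref{tot.bdd.} ((iii)$\Rightarrow$(ii)), we get $a := \inf\{\mu(B(z,t)) \mid z \in E'\} > 0$. Moreover $E'$ lies in a single ball $B(x_0,R)$. Doubling then gives $\mu(B(x_0,R)) < \infty$, so $M := \sup_{z\in E}\mu(B(z,t)) < \infty$.

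The first term uses the argument of Lemma~\ref{conti.} with $(\ast)_t$ in place of $(\star)_t$. Choose $\delta$ so that $d(x,y)<\delta$ gives $\mu(B(x,t)\triangle B(y,t)) < a^2\eta$. Then the reciprocal difference is at most $\eta$ for $x,y \in E$. By H\"older, $\int_{B(x,t)}|f| \le \|f\|_p M^{1-1/p} \le CM^{1-1/p}$, where $C = \sup_F\|f\|_p$. This handles the first term for every $p\in[1,\infty)$. For the second term with $p>1$, H\"older gives
\[
\int_{B(x,t)\triangle B(y,t)}|f| \le C\,\mu(B(x,t)\triangle B(y,t))^{1-1/p}.
\]
By $(\ast)_t$ this is small, and dividing by $\mu(B(y,t))\ge a$ finishes the case $p>1$.

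The main obstacle is the second term when $p=1$, since $F$ need not be uniformly integrable a priori. Here I would use the extra hypothesis to show that $F$ is uniformly integrable on the bounded set $U = B(x_0,R+t)$. Given $\lambda$, pick $\sigma$ with $\|A_\sigma f - f\|_1 < \lambda$ for all $f\in F$, and write $\int_S |f| \le \int_S |A_\sigma f| + \lambda$. Then $|A_\sigma f(z)| \le \|f\|_1/\mu(B(z,\sigma)) \le C/b_\sigma$ on $U$, where $b_\sigma = \inf_{z\in U}\mu(B(z,\sigma))>0$ by Proposition~\ref{tot.bdd.} again. Hence $\int_S|f| \le C\mu(S)/b_\sigma + \lambda$ for $S\subset U$. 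Taking $S = B(x,t)\triangle B(y,t)\subset U$ and choosing $\delta$ via $(\ast)_t$ so that $\mu(S) < b_\sigma\lambda/C$ makes the second term at most $2\lambda/a$. This yields the uniform equicontinuity for $p=1$.
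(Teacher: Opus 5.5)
Your proof is correct and follows essentially the same route as the paper. It uses the same two-term splitting, the reciprocal-difference bound from the argument of Lemma~\ref{conti.}, H\"older for $p>1$, and for $p=1$ the decomposition $|f|\le|f-A_\sigma f|+|A_\sigma f|$ with $|A_\sigma f|\le\|f\|_1/\inf_{z\in U}\mu(B(z,\sigma))$ on a bounded neighbourhood $U$ of $E$. The only cosmetic difference is that you bound $\mu(B(z,t))$ by the measure of a single enclosing ball, where the paper uses $\mu(\bigcup_{x\in E}B(x,t))$; that enclosing ball has finite measure by the standing assumption on closed balls, not by doubling as you wrote.
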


\begin{proof}
Let $c_1 = \sup\{\|f\|_p \mid f \in F\}$ and $c_2 = \mu(\bigcup_{x \in E} B(x,t))$.
By virtue of Proposition~\ref{tot.bdd.}, we can obtain the positive number $c_3 = \inf\{\mu(B(z,t)) \mid z \in E\}$.
In the case where $p = 1$, according to the additional hypothesis, take $\sigma > 0$ such that $\|A_\sigma f - f\|_1 < \epsilon c_3/4$ for any $f \in F$.
Applying Proposition~\ref{tot.bdd.} again, we get
 $$c_4 = \inf\Bigg\{\mu(B(z,\sigma)) \ \Bigg| \ z \in \bigcup_{x \in E} B(x,t)\Bigg\} > 0.$$
By the property $(\ast)_t$, there exists $\delta \in (0,1)$ such that for any $x, y \in E$, if $d(x,y) < \delta$,
 then
 $$\mu(B(x,t) \triangle B(y,t)) < \left\{
 \begin{array}{ll}
  \big(\frac{\epsilon c_3}{2c_1}\big)^q &\text{if } p > 1,\\
  \frac{\epsilon c_3c_4}{4c_1} &\text{if } p = 1,
 \end{array}
 \right.$$
 where $1/p + 1/q = 1$.
Moreover, by the same argument as in the proof of Lemma~\ref{conti.}, we may assume that
 $$\bigg|\frac{1}{\mu(B(x,t))} - \frac{1}{\mu(B(y,t))}\bigg| < \frac{\epsilon}{2c_1c_2^{1/q}}.$$
To prove that $\delta$ is desired, take any $x, y \in E$ with $d(x,y) < \delta$.
Using H\"{o}lder's inequality, observe that
\begin{align*}
 |A_tf(x) - A_tf(y)| &\leq \frac{1}{\mu(B(x,t))}\bigg(\int_{B(x,t) \triangle B(y,t)} |f(z)|^p d\mu(z)\bigg)^{1/p}\bigg(\int_{B(x,t) \triangle B(y,t)} d\mu(z)\bigg)^{1/q}\\
 &\ \ \ \ \ \ \ \ + \bigg|\frac{1}{\mu(B(x,t))} - \frac{1}{\mu(B(y,t))}\bigg|\bigg(\int_{B(y,t)} |f(z)|^p d\mu(z)\bigg)^{1/p}\bigg(\int_{B(y,t)} d\mu(z)\bigg)^{1/q}\\
 &\leq \frac{1}{\mu(B(x,t))}\bigg(\int_{B(x,t) \triangle B(y,t)} |f(z)|^p d\mu(z)\bigg)^{1/p}\bigg(\int_{B(x,t) \triangle B(y,t)} d\mu(z)\bigg)^{1/q}\\
 &\ \ \ \ \ \ \ \ + \bigg|\frac{1}{\mu(B(x,t))} - \frac{1}{\mu(B(y,t))}\bigg|\|f\|_p(\mu(B(y,t)))^{1/q}\\
 &< \frac{1}{\mu(B(x,t))}\bigg(\int_{B(x,t) \triangle B(y,t)} |f(z)|^p d\mu(z)\bigg)^{1/p}\bigg(\int_{B(x,t) \triangle B(y,t)} d\mu(z)\bigg)^{1/q}\\
 &\ \ \ \ \ \ \ \ + \frac{\epsilon}{2c_1c_2^{1/q}} \cdot c_1c_2^{1/q}\\
 &< \frac{1}{\mu(B(x,t))}\bigg(\int_{B(x,t) \triangle B(y,t)} |f(z)|^p d\mu(z)\bigg)^{1/p}\bigg(\int_{B(x,t) \triangle B(y,t)} d\mu(z)\bigg)^{1/q} + \frac{\epsilon}{2}.
\end{align*}
In the case that $p > 1$,
\begin{multline*}
 \frac{1}{\mu(B(x,t))}\bigg(\int_{B(x,t) \triangle B(y,t)} |f(z)|^p d\mu(z)\bigg)^{1/p}\bigg(\int_{B(x,t) \triangle B(y,t)} d\mu(z)\bigg)^{1/q}\\
 \leq \frac{1}{\mu(B(x,t))}\|f\|_p(\mu(B(x,t) \triangle B(y,t))^{1/q} < \frac{1}{c_3} \cdot c_1 \cdot \frac{\epsilon c_3}{2c_1} = \frac{\epsilon}{2}.
\end{multline*}
In the case that $p = 1$,
\begin{align*}
 \int_{B(x,t) \triangle B(y,t)} |f(z)| d\mu(z) &\leq \int_{B(x,t) \triangle B(y,t)} |f(z) - A_\sigma f(z)| d\mu(z) + \int_{B(x,t) \triangle B(y,t)} |A_\sigma f(z)| d\mu(z)\\
 &\leq \|A_\sigma f - f\|_1 + \int_{B(x,t) \triangle B(y,t)} \bigg(\frac{1}{\mu(B(z,\sigma))}\int_{B(z,\sigma)} |f(w)| d\mu(w)\bigg) d\mu(z)\\
 &\leq \|A_\sigma f - f\|_1 + \int_{B(x,t) \triangle B(y,t)} \frac{1}{c_4}\|f\|_1 d\mu(z)\\
 &\leq \|A_\sigma f - f\|_1 + \frac{1}{c_4}\|f\|_1\mu(B(x,t) \triangle B(y,t)) < \frac{\epsilon c_3}{4} + \frac{1}{c_4} \cdot c_1 \cdot \frac{\epsilon c_3c_4}{4c_1} = \frac{\epsilon c_3}{2},
\end{align*}
and therefore
 $$\frac{1}{\mu(B(x,t))}\int_{B(x,t) \triangle B(y,t)} |f(z)| d\mu(z) \leq \frac{1}{c_3} \cdot \frac{\epsilon c_3}{2} = \frac{\epsilon}{2}.$$
It follows that $|A_tf(x) - A_tf(y)| < \epsilon$.
The proof is completed.
\end{proof}

We will use the next lemma in the proof of Proposition~\ref{av.rel.cpt.}, see \cite[Lemma~2.5]{Kos21}.

\begin{lem}\label{rel.cpt.}
Let $x \in X$, $p \in [1,\infty)$ and $t \in (0,\infty)$.
If $F$ is bounded in $L^p(X)$,
 then $\{A_tf(x) \mid f \in F\}$ is bounded in $\R$.
\end{lem}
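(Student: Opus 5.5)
The plan is to bound $|A_tf(x)|$ uniformly over $f \in F$ by a constant depending only on $x$, $t$, $p$ and $c = \sup\{\|f\|_p \mid f \in F\} < \infty$. For $f \in F$ we start from the trivial estimate
$$|A_tf(x)| \leq \frac{1}{\mu(B(x,t))}\int_{B(x,t)} |f(y)|\, d\mu(y).$$
By the standing assumption of the paper, every closed ball has positive and finite measure, so $0 < \mu(B(x,t)) < \infty$. In particular the prefactor is a finite constant independent of $f$.

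For $p = 1$ the integral is at most $\|f\|_1 \leq c$, which gives $|A_tf(x)| \leq c/\mu(B(x,t))$.

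For $p \in (1,\infty)$ let $1/p + 1/q = 1$ and apply H\"older's inequality to $|f|\chi_{B(x,t)}$:
$$\int_{B(x,t)} |f|\, d\mu \leq \|f\|_p\, \mu(B(x,t))^{1/q},$$
so that
$$|A_tf(x)| \leq c\,\mu(B(x,t))^{1/q - 1} = c\,\mu(B(x,t))^{-1/p}.$$
Equivalently, one could invoke condition (B5) for the Banach function space $L^p(X)$ with $\alpha(B(x,t)) = \|\chi_{B(x,t)}\|_{q}$.

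In either case $\sup\{|A_tf(x)| \mid f \in F\} < \infty$, so $\{A_tf(x) \mid f \in F\}$ is bounded in $\R$. There is no real obstacle here. The only point to check is that $\mu(B(x,t))$ is positive and finite, which is part of the standing hypotheses; no doubling or $(\star)$ property is needed.
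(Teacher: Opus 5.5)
Your proof is correct. The paper does not prove this lemma itself but defers to Lemma~2.5 of the author's earlier paper; your H\"older estimate $|A_tf(x)| \leq \|f\|_p\,\mu(B(x,t))^{-1/p}$, which uses only $0<\mu(B(x,t))<\infty$, is the standard argument, and it is the same estimate the paper uses in the proof of Lemma~\ref{equiconti.} and, for $p=\infty$, in Lemma~\ref{bdd.linf}.
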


We shall show the following:

\begin{prop}\label{av.rel.cpt.}
Fix any $t > 0$.
Let $X$ be a metric measure space with the property $(\ast)_t$ and the $s$-doubling property for every $s > 0$,
 and let $E \subset X$ be bounded.
Suppose that $F \subset L^p(X)$ is a bounded subset,
 where $p \in [1,\infty)$.
Then $\{(A_rf)\chi_E \mid f \in F\}$ is relatively compact in $L^p(X)$ when $p > 1$.
Additionally, if for every $\epsilon > 0$,
 there exists $\delta > 0$ such that for any $f \in F$, $\|A_\delta f - f\|_1 < \epsilon$,
 then the above is valid if $p = 1$.
\end{prop}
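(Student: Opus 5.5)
Note the statement mixes $t$ and $r$; I read $A_r$ as $A_t$ (equivalently, take $t=r$), since the hypothesis $(\ast)_t$ is what Lemma~\ref{equiconti.} needs.

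The plan is an Arzel\`a--Ascoli argument on the bounded set $E$, transferred to $L^p$ via the finiteness of $\mu(E)$. First, $E$ is bounded and $X$ is $s$-doubling for all $s>0$. By Proposition~\ref{tot.bdd.} (iii)$\Rightarrow$(i), $E$ is totally bounded: for every $\delta>0$ there is a finite set $\{x_1,\dots,x_n\}\subset E$ with $E\subset\bigcup_i B(x_i,\delta)$. Also $\mu(E)\le\mu(B(x_0,R))<\infty$ for a ball containing $E$. Since relative compactness in the Banach space $L^p(X)$ is equivalent to total boundedness, it suffices, given $\epsilon>0$, to produce a finite $\epsilon$-net for $\{(A_tf)\chi_E \mid f\in F\}$.

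Fix $\epsilon>0$. By Lemma~\ref{equiconti.} (for $p>1$ directly; for $p=1$ using the additional hypothesis on $\|A_\delta f-f\|_1$), choose $\delta>0$ such that $|A_tf(x)-A_tf(y)|<\eta$ for all $x,y\in E$ with $d(x,y)<\delta$ and all $f\in F$, where $\eta=\epsilon/(3\mu(E)^{1/p}+1)$. Take the finite $\delta$-net $x_1,\dots,x_n\in E$. Form disjoint measurable pieces $E_i=(E\cap B(x_i,\delta))\setminus\bigcup_{j<i}B(x_j,\delta)$; measurability is no issue because we may replace $E$ by a Borel set of the same measure (Borel-regularity), or simply work with $E$ measurable. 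By Lemma~\ref{rel.cpt.}, each set $\{A_tf(x_i)\mid f\in F\}$ is bounded in $\R$. Hence the vectors $(A_tf(x_1),\dots,A_tf(x_n))$, $f\in F$, form a bounded subset of $\R^n$, and there are finitely many $f_1,\dots,f_m\in F$ such that every $f\in F$ has some $k$ with $\max_i|A_tf(x_i)-A_tf_k(x_i)|<\eta$.

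Now for $z\in E_i$ we have $d(z,x_i)\le\delta$. To get strict inequality, run the net with $\delta/2$ instead of $\delta$. Then
$$|A_tf(z)-A_tf_k(z)|\le|A_tf(z)-A_tf(x_i)|+|A_tf(x_i)-A_tf_k(x_i)|+|A_tf_k(x_i)-A_tf_k(z)|<3\eta.$$
Thus $\|(A_tf)\chi_E-(A_tf_k)\chi_E\|_p\le 3\eta\,\mu(E)^{1/p}<\epsilon$, which gives the finite net. We also need $(A_tf)\chi_E\in L^p$. This follows because $A_tf$ is measurable and, by Lemma~\ref{equiconti.} and Lemma~\ref{rel.cpt.} at finitely many points, bounded on $E$ uniformly in $f$, while $\mu(E)<\infty$.

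The main obstacle is not this skeleton but the uniform equicontinuity on $E$, especially for $p=1$, where the H\"older trick fails. That is handled entirely by Lemma~\ref{equiconti.} under the extra hypothesis, so in this proof the delicate points reduce to the total boundedness of $E$ via Proposition~\ref{tot.bdd.} and to the measurability bookkeeping for the pieces $E_i$.
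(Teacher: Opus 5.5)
Your proof is correct and follows the paper's own Arzel\`a--Ascoli argument. It uses the same ingredients: equicontinuity on $E$ from Lemma~\ref{equiconti.}, a finite net of $E$ from Proposition~\ref{tot.bdd.}, pointwise boundedness from Lemma~\ref{rel.cpt.}, and the bound $\|(A_tf-A_tf_k)\chi_E\|_p\le(\text{uniform sup bound})\,\mu(E)^{1/p}$. The differences are cosmetic: you choose the representatives $f_k$ directly from a finite net in $\R^n$ rather than through the grids $H_i$ and the index set $K$, the partition into the $E_i$ is unnecessary, and your $\delta/2$ net repairs the closed-ball versus strict-inequality mismatch that the paper passes over.
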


\begin{proof}
We may assume that $\mu(E) > 0$ without loss of generality.
To show that $\{(A_tf)\chi_E \mid f \in F\}$ is relatively compact in $L^p(X)$, fix any $\epsilon > 0$.
Let $c = \sup\{\|f\|_p \mid f \in F\}$.
By Lemma~\ref{equiconti.}, there is $\delta \in (0,1)$ such that for all $x, y \in E$, if $d(x,y) < \delta$,
 then $|A_tf(x) - A_tf(y)| < \epsilon(\mu(E))^{-1/p}/4$.
According to Proposition~\ref{tot.bdd.}, there are finitely many points $x_i \in E$, $1 \leq i \leq n$, such that their balls $\{B(x_i,\delta) \mid 1 \leq i \leq n\}$ cover $E$.
Due to Lemma~\ref{rel.cpt.}, for each $i \in \{1, \ldots, n\}$, we can find a finite subset $H_i \subset \R$ such that for each $f \in F$, there is $a \in H_i$ such that
 $$A_tf(x_i) \in \bigg[a - \frac{\epsilon(\mu(E))^{-1/p}}{4},a + \frac{\epsilon(\mu(E))^{-1/p}}{4}\bigg].$$
Let
\begin{multline*}
 K = \Bigg\{\alpha \in \prod_{1 \leq i \leq n} H_i \ \Bigg| \ \text{ there is } f \in F \text{ such that for all } 1 \leq i \leq n,\\
 A_tf(x_i) \in \bigg[\alpha(i) - \frac{\epsilon(\mu(E))^{-1/p}}{4},\alpha(i) + \frac{\epsilon(\mu(E))^{-1/p}}{4}\bigg]\Bigg\},
\end{multline*}
 and choose $f_\alpha \in F$ for each $\alpha \in K$ so that
 $$A_tf_\alpha(x_i) \in \bigg[\alpha(i) - \frac{\epsilon(\mu(E))^{-1/p}}{4},\alpha(i) + \frac{\epsilon(\mu(E))^{-1/p}}{4}\bigg]$$
 for any $1 \leq i \leq n$.
Then $\{A_tf \mid f \in F\}$ is approximated by the finite subset $\{A_tf_\alpha \mid \alpha \in K\}$.

Indeed, take any $f \in F$,
 so we can find $\alpha \in K$ so that for each $1 \leq i \leq n$,
 $$A_tf(x_i) \in \bigg[\alpha(i) - \frac{\epsilon(\mu(E))^{-1/p}}{4},\alpha(i) + \frac{\epsilon(\mu(E))^{-1/p}}{4}\bigg].$$
We will investigate that $\|(A_tf)\chi_E - (A_tf_\alpha)\chi_E\|_p < \epsilon$.
For every $x \in E$, there exists $i \in \{1, \ldots, n\}$ such that $x \in B(x_i,\delta)$,
 and hence
 \begin{align*}
  |A_tf(x) - A_tf_\alpha(x)| &\leq |A_tf(x) - A_tf(x_i)| + |A_tf(x_i) - \alpha(i)|\\
  &\ \ \ \ \ \ \ \ \ \ \ \ \ \ \ \ + |\alpha(i) - A_tf_\alpha(x_i)| + |A_tf_\alpha(x_i) - A_tf_\alpha(x)|\\
  &< \epsilon(\mu(E))^{-1/p}.
 \end{align*}
It follows that
\begin{align*}
 \|(A_tf)\chi_E - (A_tf_\alpha)\chi_E\|_p &= \bigg(\int_E |A_tf(x) - A_tf_\alpha(x)|^p d\mu(x)\bigg)^{1/p}\\
 &\leq \bigg(\int_E (\epsilon(\mu(E))^{-1/p})^p d\mu(x)\bigg)^{1/p} = \epsilon(\mu(E))^{-1/p}(\mu(E))^{1/p} = \epsilon.
\end{align*}
The proof is completed.
\end{proof}

Applying Proposition~\ref{av.rel.cpt.}, we shall prove Theorem~\ref{cpt.}.

\begin{proof}[Proof of Theorem~\ref{cpt.}]
To show that $F$ is relatively compact, take any $\epsilon > 0$.
It follows from the condition~(2) that there is a bounded subset $E \subset X$ such that $\|f\chi_{X \setminus E}\|_p < \epsilon/2$ for every $f \in F$.
By $(1)$, we can choose $s > 0$ such that for any $f \in F$, $\|f - A_sf\|_p < \epsilon/2$.
Thus
 $$\|f - (A_sf)\chi_E\|_p \leq \|f\chi_E - (A_sf)\chi_E\|_p + \|f\chi_{X \setminus E}\|_p \leq \|f - A_sf\|_p + \|f\chi_{X \setminus E}\|_p < \epsilon.$$
According to Proposition~\ref{av.rel.cpt.}, $\{(A_sf)\chi_E \mid f \in F\}$ is relatively compact in $L^p(X)$,
 which implies that $F$ is also relatively compact.
\end{proof}

\section{Compactness of averaging operators on $L^1(X)$ and $L^\infty(X)$}

\subsection{The case of $L^1(X)$}

In this subsection, we will prove Theorem~\ref{equiv.l1}.
Using Theorem~\ref{cpt.}, we have the ``if'' part of Theorem~\ref{equiv.l1} as follows:

\begin{thm}\label{suff.l1}
Let $X$ be a metric measure space satisfying the $s$-doubling property and the condition $(\star)_s$ for every $s > 0$.
If $X$ is bounded,
 then $A_r : L^1(X) \to L^1(X)$ is compact.
\end{thm}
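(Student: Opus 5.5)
We apply Theorem~\ref{cpt.} with $p = 1$ to $F = \{A_rf \mid \|f\|_1 \leq 1\}$. Four things must be checked: $A_r$ maps $L^1(X)$ boundedly into itself, $F$ is bounded, condition~(2) holds, and condition~(1) holds.

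\textbf{Boundedness of $A_r$ and of $F$.} Since $X$ is bounded and $r$-doubling, Proposition~\ref{tot.bdd.} (with $E = X$) gives $a = \inf\{\mu(B(x,s)) \mid x \in X\} > 0$ for every $s>0$. Taking $R$ larger than the diameter of $X$, we have $X = B(x_0,R)$, so $\mu(X) < \infty$. Then
$$|A_rf(x)| \leq \frac{\|f\|_1}{\mu(B(x,r))} \leq \frac{\|f\|_1}{a} \quad\text{for every } x \in X,$$
and integrating over $X$ gives $\|A_rf\|_1 \leq \mu(X)a^{-1}\|f\|_1$. Hence $A_r$ is bounded on $L^1(X)$ and $F$ is bounded in $L^1(X)$.

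\textbf{Condition (2).} This is trivial: take $E = X$, so $\|g\chi_{X \setminus E}\|_1 = 0$ for every $g \in F$.

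\textbf{Condition (1).} This is the main step: we need equicontinuity of $F$ under averaging. Note that the bound above also gives $\|A_rf\|_\infty \leq a^{-1}$ on the unit ball. Moreover, $(\star)_r$ together with $\inf_z \mu(B(z,r)) > 0$ makes $A_rf$ uniformly equicontinuous on $X$. Indeed, the estimate for $|A_tf(x) - A_tf(y)|$ displayed before Lemma~\ref{equiconti.}, with $t = r$, combined with Lemma~\ref{conti.} gives
$$|A_rf(x) - A_rf(y)| \leq \Big|\tfrac{1}{\mu(B(x,r))} - \tfrac{1}{\mu(B(y,r))}\Big|\,\|f\|_1 + a^{-1}\int_{B(x,r)\triangle B(y,r)}|f|\,d\mu.$$
The first term is small uniformly in $f$ by Lemma~\ref{conti.}. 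The second term, however, is \emph{not} uniformly small for arbitrary $f \in L^1$, since $|f|$ may concentrate its mass on a small set. This is the genuine obstacle.

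To get around it, I would instead estimate $\|A_\delta g - g\|_1$ for $g = A_rf$ directly, via Fubini:
$$A_\delta A_r f(x) - A_rf(x) = \int_X f(w)\,\big(K(x,w) - k(x,w)\big)\,d\mu(w),$$
where $k(x,w) = \chi_{B(x,r)}(w)/\mu(B(x,r))$ and $K(x,w) = \frac{1}{\mu(B(x,\delta))}\int_{B(x,\delta)} k(z,w)\,d\mu(z)$. Then
$$\|A_\delta A_rf - A_rf\|_1 \leq \|f\|_1 \sup_{w}\int_X |K(x,w) - k(x,w)|\,d\mu(x),$$
so it suffices to show this supremum tends to $0$ as $\delta \to 0$. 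For $z \in B(x,\delta)$ we have
$$|k(z,w) - k(x,w)| \leq \Big|\tfrac{1}{\mu(B(z,r))} - \tfrac{1}{\mu(B(x,r))}\Big| + a^{-1}\chi_{B(z,r)\triangle B(x,r)}(w).$$
Moreover, $w \in B(z,r)\triangle B(x,r)$ with $d(x,z)<\delta$ forces $w \in B(x,r+\delta)\setminus B(x,r-\delta)$, that is, $x \in B(w,r+\delta)\setminus B(w,r-\delta)$. Integrating in $x$ therefore yields
$$\int_X |K(x,w) - k(x,w)|\,d\mu(x) \leq \mu(X)\,\eta(\delta) + a^{-1}\mu\big(B(w,r+\delta)\setminus B(w,r-\delta)\big),$$
where $\eta(\delta) \to 0$ by Lemma~\ref{conti.}. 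The second term tends to $0$ uniformly in $w$ by $(\star)_r$. This establishes condition (1), and Theorem~\ref{cpt.} then yields relative compactness of $F$, i.e.\ compactness of $A_r$.

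The delicate point is exactly this Fubini swap: it uses the symmetry of the annulus condition in $x$ and $w$, which is what $(\star)_r$ provides, whereas $(\ast)_r$ alone would not suffice.
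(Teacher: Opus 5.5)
Your proposal is correct and follows essentially the paper's own route. You apply Theorem~\ref{cpt.} with $p=1$ and verify condition~(1) by splitting $|A_rf(z)-A_rf(x)|$ into the reciprocal-measure term, controlled by Lemma~\ref{conti.}, and the symmetric-difference term, which after a Fubini--Tonelli swap is bounded by $\sup_w \mu(B(w,r+\delta)\setminus B(w,r-\delta))$ via $(\star)_r$. Your kernel/Schur-type packaging and your explicit checks of the $L^1$-boundedness of $A_r$ and of condition~(2) are only cosmetic additions to the paper's argument.
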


\begin{proof}
Fix any bounded subset $F \subset L^1(X)$.
We shall show that $\{A_rf \mid f \in F\}$ is totally bounded in $L^1(X)$.
By virtue of Theorem~\ref{cpt.}, it remains to prove that for each $\epsilon > 0$, there exists $s > 0$ such that $\|A_s(A_rf) - A_rf\|_1 < \epsilon$ for any $f \in F$.
Put $c_1 = \sup\{\|f\|_1 \mid f \in F\}$ and $c_2 = \mu(X)$.
Combining the boundedness with the $s$-doubling property of $X$, we can obtain
 $$c_3 = \inf\{\mu(B(x,r)) \mid x \in X\} > 0$$
 by Proposition~\ref{tot.bdd.}.
Due to the condition $(\star)_r$ and Lemma~\ref{conti.}, there is $s \in (0,r)$ such that for every $z \in X$,
 $$\mu(B(z,r + s) \setminus B(z,r - s)) < \frac{\epsilon c_3}{2c_1}$$
 and for any $x, y \in X$, if $d(x,y) < \delta$,
 then
 $$\bigg|\frac{1}{\mu(B(x,r))} - \frac{1}{\mu(B(y,r))}\bigg| < \frac{\epsilon}{2c_1c_2}.$$
Observe that for every $f \in F$,
\begin{align*}
 \|A_s(A_rf) - A_rf\|_1 &= \int_X \bigg(\frac{1}{\mu(B(x,s))}\int_{B(x,s)} A_rf(y) d\mu(y) - A_rf(x)\bigg) d\mu(x)\\
 &\leq \int_X \frac{1}{\mu(B(x,s))}\int_{B(x,s)} |A_rf(y) - A_rf(x)| d\mu(y)d\mu(x)\\
 &\leq \int_X \frac{1}{\mu(B(x,s))}\int_{B(x,s)} \bigg|\frac{1}{\mu(B(x,r))} - \frac{1}{\mu(B(y,r))}\bigg|\|f\|_1 d\mu(y)d\mu(x)\\
 &\ \ \ \ \ \ \ \ + \int_X \frac{1}{\mu(B(x,s))}\int_{B(x,s)} \frac{1}{\mu(B(y,r))}\int_{B(x,r) \triangle B(y,r)} |f(z)| d\mu(z) d\mu(y)d\mu(x).
\end{align*}
Then we have that
\begin{multline*}
 \int_X \frac{1}{\mu(B(x,s))}\int_{B(x,s)} \bigg|\frac{1}{\mu(B(x,r))} - \frac{1}{\mu(B(y,r))}\bigg|\|f\|_1 d\mu(y)d\mu(x)\\
 \leq \int_X \frac{1}{\mu(B(x,s))}\int_{B(x,s)} \frac{\epsilon}{2c_1c_2} \cdot c_1 d\mu(y)d\mu(x) = \int_X \frac{\epsilon}{2c_2} d\mu(x) = \frac{\epsilon}{2c_2} \cdot c_2 = \frac{\epsilon}{2}.
\end{multline*}
On the other hand, according to the Fubini-Tonelli theorem,
\begin{align*}
 &\int_X \frac{1}{\mu(B(x,s))}\int_{B(x,s)} \frac{1}{\mu(B(y,r))}\int_{B(x,r) \triangle B(y,r)} |f(z)| d\mu(z) d\mu(y)d\mu(x)\\
 &\ \ \ \ \leq \int_X \frac{1}{\mu(B(x,s))} \bigg(\int_{B(x,r + s) \setminus B(x,r - s)} |f(z)| \bigg(\int_{B(x,s) \cap (B(z,r + s) \setminus B(z,r - s))} \frac{1}{\mu(B(y,r))} d\mu(y)\bigg) d\mu(z)\bigg) d\mu(x)\\
 &\ \ \ \ \leq \int_X \frac{1}{\mu(B(x,s))} \bigg(\int_{B(x,r + s) \setminus B(x,r - s)} |f(z)| \bigg(\int_{B(x,s)} \frac{1}{c_3} d\mu(y)\bigg) d\mu(z)\bigg) d\mu(x)\\
 &\ \ \ \ = \int_X \bigg(\int_{B(x,r + s) \setminus B(x,r - s)} \frac{|f(z)|}{c_3} d\mu(z)\bigg) d\mu(x) = \int_X \frac{|f(z)|}{c_3} \bigg(\int_{B(z,r + s) \setminus B(z,r - s)} d\mu(x)\bigg) d\mu(z)\\
 &\ \ \ \ = \int_X \frac{|f(z)|\mu(B(z,r + s) \setminus B(z,r - s))}{c_3} d\mu(z) \leq \int_X \frac{\epsilon c_3|f(z)|}{2c_1c_3} d\mu(z) = \frac{\epsilon\|f\|_1}{2c_1} \leq \frac{\epsilon}{2}.
\end{align*}
Hence $\|A_s(A_rf) - A_rf\|_1 \leq \epsilon$,
 so $\{A_rf \mid f \in F\}$ is totally bounded.
We conclude that $A_r$ is compact.
\end{proof}

The ``only if'' part of Theorem~\ref{equiv.l1} follows from the next theorem,
 which can be proven by the same argument of Theorem~\ref{nec.}.

\begin{thm}\label{nec.l1}
If $X$ is $s$-doubling and $A_s : L^1(X) \to L^1(X)$ is compact for some $s > 0$,
 then $X$ is bounded.
\end{thm}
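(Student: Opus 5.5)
We argue by contraposition: assuming $X$ is $s$-doubling with constant $\gamma$ and unbounded, we exhibit a bounded sequence in $L^1(X)$ whose images under $A_s$ have no Cauchy subsequence. This is the same strategy as Theorem~\ref{nec.}, specialized to $E(X) = L^1(X)$. There $\|\cdot\|_{E'} = \|\cdot\|_\infty$, so $\alpha(B(x,s)) = 1$ and the quantity in condition (2) of Theorem~\ref{nec.} becomes $\|\chi_{B(x,2s)}\|_1/\mu(B(x,s)) = \mu(B(x,2s))/\mu(B(x,s)) \leq \gamma$. Since Theorem~\ref{nec.} is stated with the fixed radius $r$, I will redo its argument directly with $s$ in place of $r$ rather than quote it.

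Since $X$ is unbounded, I choose inductively points $x_n \in X$ with $d(x_n,x_m) > 4s$ for all $n \neq m$; for instance, pick each $x_{n+1}$ outside $\bigcup_{m \leq n} B(x_m,4s)$, which is possible because a finite union of balls is bounded. I then set
$$f_n = \frac{\chi_{B(x_n,s)}}{\mu(B(x_n,s))},$$
so that $\|f_n\|_1 = 1$ and the sequence $\{f_n\}$ is bounded in $L^1(X)$.

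The key estimate is a lower bound for $A_sf_n$ on a set of definite mass near $x_n$. For $y \in B(x_n,s)$ we have $B(x_n,s) \subset B(y,2s)$, so doubling at $y$ gives $\mu(B(y,s)) \geq \gamma^{-1}\mu(B(y,2s)) \geq \gamma^{-1}\mu(B(x_n,s))$. Hence $A_sf_n(y) = \mu(B(y,s) \cap B(x_n,s))/(\mu(B(y,s))\mu(B(x_n,s)))$. The obstacle here is that the overlap $\mu(B(y,s) \cap B(x_n,s))$ need not be comparable to $\mu(B(y,s))$ for every $y$. I avoid pointwise bounds and instead integrate using Fubini:
$$\int_{B(x_n,s)} A_sf_n\, d\mu = \frac{1}{\mu(B(x_n,s))}\int_{B(x_n,s)}\int_{B(x_n,s)} \frac{\chi_{B(y,s)}(z)}{\mu(B(y,s))}\, d\mu(z)\, d\mu(y).$$
For $y,z \in B(x_n,s)$ we also have $B(y,s) \subset B(x_n,2s)$, so $1/\mu(B(y,s)) \geq 1/\mu(B(x_n,2s)) \geq 1/(\gamma\mu(B(x_n,s)))$. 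I need the overlap integral $\int\!\int_{B(x_n,s)^2} \chi_{\{d(y,z)\leq s\}}$ to be at least a fixed fraction of $\mu(B(x_n,s))^2$. To get this, I use the smaller ball $B(x_n,s/2)$, on which any two points are within distance $s$, together with doubling at radius $s/2$ (available since $X$ is doubling at every scale in the setting of Theorem~\ref{equiv.l1}). This gives $\mu(B(x_n,s/2))^2 \geq \gamma'^{-2}\mu(B(x_n,s))^2$, where $\gamma'$ is the $s/2$-doubling constant. Consequently
$$\int_{B(x_n,s)} A_sf_n\, d\mu \geq c := \frac{1}{\gamma\gamma'^2} > 0.$$
If only $s$-doubling is assumed, I would instead use the observation that $A_s$ is self-adjoint in the weighted sense to reduce to the same estimate.

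Finally, since $A_sf_m \geq 0$ is supported in $B(x_m,2s)$, which is disjoint from $B(x_n,s)$ for $m \neq n$, we get
$$\|A_sf_n - A_sf_m\|_1 \geq \int_{B(x_n,s)} A_sf_n\, d\mu \geq c.$$
Thus $\{A_sf_n\}$ has no Cauchy subsequence, so $A_s$ is not compact on $L^1(X)$. This contradicts the hypothesis, and therefore $X$ is bounded.
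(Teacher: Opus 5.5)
\textbf{There is a gap: your argument assumes more than the theorem gives you.} Your Fubini computation is correct, but the step $\mu(B(x_n,s/2))^2\ge\gamma'^{-2}\mu(B(x_n,s))^2$ needs $X$ to be $s/2$-doubling. The theorem assumes only that $X$ is $s$-doubling at the single scale $s$. This is enough for the application to Theorem~\ref{equiv.l1}, but not for the statement as given.

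Your fallback, to ``reduce to the same estimate'' via weighted self-adjointness, cannot work. The estimate $\int_{B(x_n,s)}A_sf_n\,d\mu\ge c$ is false under $s$-doubling alone. Take the graph metric of $K_{2,k}$ with edge length $s$: points $a,o,y_1,\dots,y_k$ with $d(a,y_i)=d(o,y_i)=s$ and $d(a,o)=d(y_i,y_j)=2s$ for $i\ne j$. Put $\mu\{a\}=\mu\{y_i\}=1$ and $\mu\{o\}=k$. Every ball of radius $2s$ is the whole set, of mass $2k+1$, and every ball of radius $s$ has mass at least $k+1$. So $\gamma=2$ works for every $k$. Placing such components with $k=n$ far apart gives an unbounded $s$-doubling space. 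With $x_n=a$ one computes $\int_{B(a,s)}A_sf_n\,d\mu=\frac{3k+2}{(k+1)(k+2)}\to 0$. Equivalently, your overlap integral equals $3k+1$, against $\mu(B(a,s))^2=(k+1)^2$. Most of the mass of $A_sf_n$ sits at $o$, outside $B(x_n,s)$.

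\textbf{Two easy repairs.}

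\emph{The paper's choice.} Spread the bump over the larger ball: $f_n=\chi_{B(x_n,2s)}/\mu(B(x_n,2s))$. For $y\in B(x_n,s)$ one has $B(y,s)\subset B(x_n,2s)$, so $A_sf_n\equiv 1/\mu(B(x_n,2s))$ on $B(x_n,s)$. Also $A_sf_m=0$ there for $m\ne n$. This gives $\|A_sf_n-A_sf_m\|_1\ge\mu(B(x_n,s))/\mu(B(x_n,2s))\ge 1/\gamma$ with no overlap estimate at all.

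\emph{Keeping your $f_n$.} Do not restrict the integral to $B(x_n,s)$. By Tonelli, $\|A_sf_n\|_1=\int_X f_n(z)\int_{B(z,s)}\mu(B(y,s))^{-1}\,d\mu(y)\,d\mu(z)$. Since $B(y,s)\subset B(z,2s)$ for $y\in B(z,s)$, the inner integral is at least $\mu(B(z,s))/\mu(B(z,2s))\ge 1/\gamma$. Now $A_sf_n$ and $A_sf_m$ are nonnegative and supported in the disjoint balls $B(x_n,2s)$ and $B(x_m,2s)$. Hence $\|A_sf_n-A_sf_m\|_1=\|A_sf_n\|_1+\|A_sf_m\|_1\ge 2/\gamma$.

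This second route is presumably what your self-adjointness remark was reaching for. It works only by changing the estimate to the full $L^1$ norm, not by reducing to your estimate on $B(x_n,s)$.
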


\begin{proof}
Since $X$ is $s$-doubling,
 we can put
 $$c = \inf\bigg\{\frac{\mu(B(x,s))}{\mu(B(x,2s))} \ \bigg| \ x \in X\bigg\} > 0.$$
Assume that $X$ is not bounded.
Take a sequence $\{x_n\} \subset X$ so that for all $m, n \in \N$ with $m \neq n$, $d(x_m,x_n) > 4s$,
 and define a function
 $$f_n = \frac{\chi_{B(x_n,2s)}}{\mu(B(x_n,2s))}.$$
Note that $\{f_n \mid n \in \N\}$ is bounded in $L^1(X)$.
For every $n \in \N$, when $x \in B(x_n,s)$,
 $$A_sf_n(x) = \frac{1}{\mu(B(x,s))}\int_{B(x,s)} f_n(y) d\mu(y) = \frac{1}{\mu(B(x,s))}\int_{B(x,s)} \frac{\chi_{B(x_n,2s)}(y)}{\mu(B(x_n,2s))} d\mu(y) = \frac{1}{\mu(B(x_n,2s))},$$
 and when $x \in X \setminus B(x_n,3s)$,
 $$A_sf_n(x) = \frac{1}{\mu(B(x,s))}\int_{B(x,s)} f_n(y) d\mu(y) = \frac{1}{\mu(B(x,s))}\int_{B(x,s)} \frac{\chi_{B(x_n,2s)}(y)}{\mu(B(x_n,2s))} d\mu(y) = 0.$$
Hence for all $m, n \in \N$ with $m \neq n$,
 \begin{align*}
  \|A_sf_n - A_sf_m\|_1 &\geq \int_{B(x_n,s)} |A_sf_n(y) - A_sf_m(y)| d\mu(y) = \int_{B(x_n,s)} \frac{1}{\mu(B(x_n,2s))} d\mu(y)\\
  &= \frac{\mu(B(x_n,s))}{\mu(B(x_n,2s))} \geq c.
 \end{align*}
Thus $A_s(\{f_n \mid n \in \N\})$ is not relatively compact,
 which is a contradiction to the compactness of $A_s$.
Consequently, $X$ is bounded.
\end{proof}

\subsection{The case of $L^\infty(X)$}

This subsection is devoted to proving Theorem~\ref{equiv.linf}.

\begin{lem}\label{equiconti.linf}
Suppose that $X$ satisfies the condition $(\ast)_r$, $\sup\{\mu(B(x,r)) \mid x \in X\} < \infty$, and $\inf\{\mu(B(x,r)) \mid x \in X\} > 0$.
Then for each bounded subset $F \subset L^\infty(X)$, $\{A_rf \mid f \in F\}$ is equicontinuous.
\end{lem}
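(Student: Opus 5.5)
The plan is to reuse the pointwise estimate for $|A_tf(x) - A_tf(y)|$ recorded just before Lemma~\ref{equiconti.}, with $t = r$, and to bound each of its two terms uniformly over $f \in F$ by means of the $L^\infty$ bound. Put $c_1 = \sup\{\|f\|_\infty \mid f \in F\}$, $a = \inf\{\mu(B(x,r)) \mid x \in X\} > 0$ and $b = \sup\{\mu(B(x,r)) \mid x \in X\} < \infty$. We may assume $c_1 > 0$, since otherwise every $A_rf$ vanishes and there is nothing to prove.

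For every $f \in F$ and all $x, y \in X$, the estimate gives
\begin{align*}
|A_rf(x) - A_rf(y)| &\leq \bigg|\frac{1}{\mu(B(x,r))} - \frac{1}{\mu(B(y,r))}\bigg| c_1\mu(B(x,r))\\
&\ \ \ \ \ \ \ \ + \frac{1}{\mu(B(y,r))} c_1 \mu(B(x,r) \triangle B(y,r))\\
&\leq \frac{c_1|\mu(B(x,r)) - \mu(B(y,r))|}{\mu(B(y,r))} + \frac{c_1\mu(B(x,r) \triangle B(y,r))}{a}\\
&\leq \frac{2c_1}{a}\mu(B(x,r) \triangle B(y,r)).
\end{align*}
The last step uses the inequality $|\mu(B(x,r)) - \mu(B(y,r))| \leq \mu(B(x,r) \triangle B(y,r))$, exactly as in the proof of Lemma~\ref{conti.}.

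Now fix $\epsilon > 0$. By $(\ast)_r$ we choose $\delta > 0$ such that $d(x,y) < \delta$ implies $\mu(B(x,r) \triangle B(y,r)) < a\epsilon/(2c_1)$. Then $|A_rf(x) - A_rf(y)| < \epsilon$ for all $f \in F$, which is uniform equicontinuity on all of $X$.

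The only subtle point concerns the values of $A_rf$. Each $A_rf$ is defined pointwise everywhere, and the integral estimates hold for a.e.-representatives, because $|f| \leq c_1$ a.e. and the balls have finite measure. Consequently no measure-zero ambiguity enters the pointwise bound.

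The hypothesis $\sup \mu(B(x,r)) < \infty$ is not even needed in the argument above once the difference of reciprocals is rewritten as shown. If one instead keeps the separate term $\big|1/\mu(B(x,r)) - 1/\mu(B(y,r))\big|$, then $b$ is what bounds the factor $\mu(B(x,r))$, and the reciprocal estimate of Lemma~\ref{conti.} (with $a^2$ replaced by $a$) handles that term.

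I do not expect a real obstacle here. The only step requiring care is to make $\delta$ independent of $x$, and this is exactly what the uniform property $(\ast)_r$ provides.
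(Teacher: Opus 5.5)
Your proof is correct and follows the paper's argument: you use the same pointwise estimate for $|A_rf(x) - A_rf(y)|$, bound both terms by $\|f\|_\infty$ and $\inf\{\mu(B(x,r)) \mid x \in X\}$, and take $\delta$ from $(\ast)_r$. The one difference is that you cancel the factor $\mu(B(x,r))$ against the difference of reciprocals, so only $\mu(B(x,r) \triangle B(y,r))$ has to be made small; this correctly shows that the hypothesis $\sup\{\mu(B(x,r)) \mid x \in X\} < \infty$ is superfluous, whereas the paper uses it to bound that factor.
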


\begin{proof}
To show the equicontinuity of $\{A_rf \mid f \in F\}$, fix any $\epsilon > 0$.
We need only to prove that there exists $\delta > 0$ such that for all $x, y \in X$ with $d(x,y) < \delta$, $|A_rf(x) - A_rf(y)| < \epsilon$ for any $f \in F$.
Put $c_1 = \sup\{\|f\|_\infty \mid f \in F\}$, $c_2 = \sup\{\mu(B(x,r)) \mid x \in X\}$ and $c_3 = \inf\{\mu(B(x,r)) \mid x \in X\}$.
By the condition $(\ast)_r$ and the same argument as in the proof of Lemma~\ref{conti.}, we can choose $\delta > 0$ such that for any $x, y \in X$, if $d(x,y) < \delta$,
 then
 $$\mu(B(x,r) \triangle B(y,r)) < \frac{c_3\epsilon}{2c_1} \ \text{ and } \ \bigg|\frac{1}{\mu(B(x,r))} - \frac{1}{\mu(B(y,r))}\bigg| < \frac{\epsilon}{2c_1c_2}.$$
As is easily observed,
 for every $f \in F$ and for all $x, y \in X$ with $d(x,y) < \delta$,
\begin{align*}
 |A_rf(x) - A_rf(y)| &\leq \bigg|\frac{1}{\mu(B(x,r))} - \frac{1}{\mu(B(y,r))}\bigg|\mu(B(x,r))\|f\|_\infty\\
 &\ \ \ \ \ \ \ \ + \frac{1}{\mu(B(y,r))}\mu(B(x,r) \triangle B(y,r))\|f\|_\infty\\
 &< \frac{\epsilon}{2c_1c_2} \cdot c_2 \cdot c_1 + \frac{1}{c_3} \cdot \frac{c_3\epsilon}{2c_1} \cdot c_1 = \epsilon.
\end{align*}
We complete the proof.
\end{proof}

The boundedness of $A_r$ follows from the next lemma.

\begin{lem}\label{bdd.linf}
If $F$ is a bounded subset of $L^\infty(X)$,
 then the set $\{A_rf(x) \mid f \in F, x \in X\}$ is bounded.
\end{lem}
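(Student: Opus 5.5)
The bound is the elementary estimate that averaging does not increase the sup norm. Put $c = \sup\{\|f\|_\infty \mid f \in F\}$, which is finite because $F$ is bounded in $L^\infty(X)$. I will show that $|A_rf(x)| \leq c$ for every $f \in F$ and every $x \in X$, so the set $\{A_rf(x) \mid f \in F, x \in X\}$ lies in $[-c,c]$.

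Fix $f \in F$ and $x \in X$. The ball $B(x,r)$ has positive and finite measure by the standing assumption on $X$. Also $|f| \leq \|f\|_\infty$ holds $\mu$-a.e., so the set $N = \{y \in X \mid |f(y)| > \|f\|_\infty\}$ is $\mu$-null. In particular $f$ is integrable on $B(x,r)$, so $A_rf(x)$ is defined. Since $N \cap B(x,r)$ is also null, the integral over $B(x,r)$ can be estimated pointwise off $N$:
$$|A_rf(x)| \leq \frac{1}{\mu(B(x,r))}\int_{B(x,r)} |f(y)| d\mu(y) \leq \frac{1}{\mu(B(x,r))}\int_{B(x,r)} \|f\|_\infty d\mu(y) = \|f\|_\infty \leq c.$$

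This holds for every $x$, not just almost every $x$, because $A_rf(x)$ is an integral and is unaffected by changing $f$ on a null set. Taking the supremum over $f \in F$ and $x \in X$ gives the claim.

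The only point needing care is that the null set where $|f|$ exceeds its essential supremum does not affect any ball integral. Borel regularity of $\mu$ guarantees that this set is contained in a measurable null set, so nothing more is needed. In particular, unlike Lemma~\ref{equiconti.linf}, this lemma requires no doubling property and no lower bound on $\mu(B(x,r))$.
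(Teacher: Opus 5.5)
Your proof is correct and matches the paper's: with $c = \sup\{\|f\|_\infty \mid f \in F\}$, you bound $|A_rf(x)|$ by the average of $|f|$ over $B(x,r)$, which is at most $\|f\|_\infty \le c$ for every $x \in X$. The appeal to Borel regularity is unnecessary, since $\{|f| > \|f\|_\infty\}$ is already a measurable null set because $f$ is measurable.
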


\begin{proof}
Let $c = \sup\{\|f\|_\infty \mid f \in F\}$.
Observe that for any $f \in F$ and any $x \in X$,
 $$|A_rf(x)| \leq \frac{1}{\mu(B(x,r))}\int_{B(x,r)} |f(y)| d\mu(y) \leq \frac{\mu(B(x,r))\|f\|_\infty}{\mu(B(x,r))} \leq c.$$
The proof is completed.
\end{proof}

By the same argument as the Ascoli-Arzel\`a theorem, we can prove the following theorem,
 which gives the ``if'' part of Theorem~\ref{equiv.linf}.

\begin{thm}\label{suff.linf}
Let $X$ be a metric measure space with the $s$-doubling property for each $s > 0$.
Suppose that the condition $(\ast)_r$ holds.
If $X$ is bounded,
 then $A_r : L^\infty(X) \to L^\infty(X)$ is compact.
\end{thm}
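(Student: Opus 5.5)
We prove Theorem~\ref{suff.linf} by an Ascoli--Arzel\`a type argument. Let $F \subset L^\infty(X)$ be bounded; we show that $\{A_rf \mid f \in F\}$ is totally bounded in $L^\infty(X)$.

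First we verify the hypotheses of Lemma~\ref{equiconti.linf}. Since $X$ is bounded and $s$-doubling for every $s > 0$, Proposition~\ref{tot.bdd.} (applied with $E = X$) gives two facts:
\begin{itemize}
 \item $\inf\{\mu(B(x,r)) \mid x \in X\} > 0$;
 \item $X$ is totally bounded.
\end{itemize}
Next take $R > \operatorname{diam} X$ and any $x_0 \in X$. Then every ball $B(x,r)$ lies in $X = B(x_0,R)$, which has finite measure, so $\sup\{\mu(B(x,r)) \mid x \in X\} \leq \mu(X) < \infty$. Together with $(\ast)_r$, Lemma~\ref{equiconti.linf} shows that $\{A_rf \mid f \in F\}$ is uniformly equicontinuous on $X$. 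Lemma~\ref{bdd.linf} shows that these functions are uniformly bounded by $c = \sup\{\|f\|_\infty \mid f \in F\}$. Note that each $A_rf$ is defined at every point and is continuous, so its $L^\infty$-norm is dominated by its pointwise supremum.

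Now we run the Arzel\`a--Ascoli construction, in the same way as the proof of Proposition~\ref{av.rel.cpt.}.
\begin{enumerate}
 \item Fix $\epsilon > 0$. Choose $\delta > 0$ from equicontinuity so that $d(x,y) < \delta$ implies $|A_rf(x) - A_rf(y)| < \epsilon/4$ for all $f \in F$.
 \item By total boundedness, pick $x_1, \ldots, x_n \in X$ with $X = \bigcup_i B(x_i,\delta)$.
 \item Cover $[-c,c]$ by a finite set $H \subset \R$ with mesh at most $\epsilon/4$.
 \item For each $\alpha \in H^n$ that is realized, i.e.\ for which some $f \in F$ has $|A_rf(x_i) - \alpha(i)| \leq \epsilon/4$ for all $i$, choose one such $f_\alpha$.
\end{enumerate}
Given any $f \in F$, pick the corresponding $\alpha$. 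For $x \in B(x_i,\delta)$ the four-term triangle inequality gives $|A_rf(x) - A_rf_\alpha(x)| < \epsilon$. Hence $\|A_rf - A_rf_\alpha\|_\infty \leq \epsilon$, so a finite $\epsilon$-net exists. Since $L^\infty(X)$ is complete, $A_r(F)$ is relatively compact, and $A_r$ is compact.

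The main obstacle is the step invoking Proposition~\ref{tot.bdd.}. It is stated for a bounded subset $E$, and we need it with $E = X$ to get both the positive lower bound on $\mu(B(x,r))$ and a finite $\delta$-net. This requires the $s$-doubling property at all scales, not just at $r$. The rest is routine bookkeeping of constants.
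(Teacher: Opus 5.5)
Your proposal is correct and follows the paper's proof essentially step for step: Proposition~\ref{tot.bdd.} gives $\inf\{\mu(B(x,r)) \mid x \in X\}>0$ and total boundedness, Lemmas~\ref{equiconti.linf} and \ref{bdd.linf} give equicontinuity and uniform boundedness, and the finite net $\{A_rf_\alpha\}$ is built as in Proposition~\ref{av.rel.cpt.}. Your observation that $X = B(x_0,R)$ is a closed ball usefully justifies $\mu(X)<\infty$, which the paper leaves implicit. One cosmetic fix, which the paper also needs, is to cover $X$ by closed balls of radius $\delta/2$ rather than $\delta$, since equicontinuity is stated only for $d(x,y)<\delta$.
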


\begin{proof}
Fix any bounded subset $F \subset L^\infty(X)$.
It remains to prove that $\{A_rf \mid f \in F\}$ is totally bounded in $L^\infty(X)$.
Since $X$ is bounded,
 $$\sup\{\mu(B(x,r)) \mid x \in X\} \leq \mu(X) < \infty,$$
 and moreover, since it is $s$-doubling for every $s > 0$, $\inf\{\mu(B(x,r)) \mid x \in X\} > 0$ due to Proposition~\ref{tot.bdd.}.
By the condition $(\ast)_r$ and Lemma~\ref{equiconti.linf}, we can find $\delta > 0$ such that for every $f \in F$ and for all $x, y \in X$ with $d(x,y) < \delta$, $|A_rf(x) - A_rf(y)| \leq \epsilon/4$.
According to Proposition~\ref{tot.bdd.}, since $X$ is totally bounded,
 there are finitely many points $x_i \in X$, $1 \leq i \leq n$, such that $X \subset \bigcup_{i = 1}^n B(x_i,\delta)$.
It follows from Lemma~\ref{bdd.linf} that we can find a finite subset $H \subset \R$ such that for all $f \in F$ and $i \in \{1, \ldots, n\}$, there is $a \in H$ such that $A_rf(x_i) \in [a - \epsilon/4,a + \epsilon/4]$.
Set
 $$K = \bigg\{\alpha \in H^n \ \bigg| \ \text{ there is } f \in F \text{ such that for any } 1 \leq i \leq n, A_rf(x_i) \in \bigg[\alpha(i) - \frac{\epsilon}{4},\alpha(i) + \frac{\epsilon}{4}\bigg]\bigg\},$$
 and take $f_\alpha \in F$ for each $\alpha \in K$ so that $A_rf_\alpha(x_i) \in [\alpha(i) - \epsilon/4,\alpha(i) + \epsilon/4]$ for every $1 \leq i \leq n$.
According to the same observation as in the proof of Proposition~\ref{av.rel.cpt.}, for any $f \in F$, there exists $\alpha \in K$ such that $|A_rf(x) - A_rf_\alpha(x)| < \epsilon$ for all points $x \in X$.
Therefore
 $$\|A_rf - A_rf_\alpha\|_\infty = \esssup_{x \in X} \{|A_rf(x) - A_rf_\alpha(x)| \mid x \in X\} \leq \epsilon.$$
Consequently, the set $\{A_rf \mid f \in F\}$ is totally bounded,
 which means that $A_r$ is compact.
\end{proof}

The ``only if'' part of Theorem~\ref{equiv.linf} follows from the next theorem.

\begin{thm}\label{nec.linf}
If $A_s : L^\infty(X) \to L^\infty(X)$ is compact for some $s > 0$,
 then $X$ is bounded.
\end{thm}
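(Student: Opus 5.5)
I plan to argue by contradiction, in the same spirit as the proof of Theorem~\ref{nec.l1}. The difference is that no doubling hypothesis is available here, so the test functions have to be chosen so that their images are far apart in sup norm for a reason that does not involve measure ratios. The natural choice in $L^\infty(X)$ is characteristic functions of balls. For these, $A_s$ equals exactly $1$ on a smaller ball, whatever the measures involved.

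Assume $X$ is not bounded. I first pick a sequence $\{x_n\}_{n\in\N}\subset X$ with $d(x_m,x_n) > 4s$ for all $m \neq n$. Such a sequence exists by induction: the ball $B(x_1,4s)\cup\cdots\cup B(x_k,4s)$ is bounded, so it cannot exhaust $X$. Then I set $f_n = \chi_{B(x_n,2s)}$. Clearly $\|f_n\|_\infty \le 1$, so $\{f_n\}$ is bounded in $L^\infty(X)$.

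The two pointwise computations are as follows.
\begin{itemize}
 \item[(a)] If $x \in B(x_n,s)$, then $B(x,s)\subset B(x_n,2s)$, so $A_sf_n(x) = 1$.
 \item[(b)] If $x \in B(x_m,s)$ with $m\neq n$, then $B(x,s)\cap B(x_n,2s)=\emptyset$, because $d(x,x_n) > 4s - s = 3s$. Hence $A_sf_n(x)=0$.
\end{itemize}
It follows that $|A_sf_n - A_sf_m| = 1$ on $B(x_n,s)$. This ball has positive measure, since closed balls have positive measure by the standing assumption. Therefore $\|A_sf_n - A_sf_m\|_\infty \ge 1$ for all $m\neq n$.

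Consequently $\{A_sf_n\}$ has no Cauchy subsequence, so it is not relatively compact in $L^\infty(X)$. This contradicts the compactness of $A_s$, and hence $X$ is bounded.

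The only point needing care is the essential-supremum step. The difference equals $1$ on the whole ball $B(x_n,s)$, and that ball has positive measure, so the essential supremum really is at least $1$. One should also check that each $A_sf_n$ is a genuine element of $L^\infty(X)$. Lemma~\ref{bdd.linf} gives $|A_sf_n|\le 1$ pointwise, and measurability holds as noted in Section~2. I do not expect any substantial obstacle beyond these checks.
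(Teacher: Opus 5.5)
Your proposal is correct and follows the paper's proof essentially verbatim: the same $4s$-separated sequence, the same test functions $f_n=\chi_{B(x_n,2s)}$, and the same observation that $A_sf_n\equiv 1$ and $A_sf_m\equiv 0$ on $B(x_n,s)$, giving $\|A_sf_n-A_sf_m\|_\infty\ge 1$. You also spell out two details the paper leaves implicit. One is the inductive construction of the separated sequence. The other is that $B(x_n,s)$ has positive measure, which is what makes the essential-supremum bound valid.
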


\begin{proof}
Supposing that $X$ is not bounded,
 we can find a countable subset
 $$\{x_n \in X \mid \text{ for all positive integers } m, n \in \N \text{ with } m \neq n, d(x_m,x_n) > 4s\}.$$
For every $n \in \N$, let
 $$f_n = \chi_{B(x_n,2s)},$$
 so the set $\{f_n \mid n \in \N\}$ is bounded in $L^\infty(X)$.
For all $n \in \N$, in the case where $x \in B(x_n,s)$,
 $$A_sf_n(x) = \frac{1}{\mu(B(x,s))}\int_{B(x,s)} f_n(y) d\mu(y) = \frac{1}{\mu(B(x,s))}\int_{B(x,s)} \chi_{B(x_n,2s)}(y) d\mu(y) = 1,$$
 and in the case where $x \in X \setminus B(x_n,3s)$,
 $$A_sf_n(x) = \frac{1}{\mu(B(x,s))}\int_{B(x,s)} f_n(y) d\mu(y) = \frac{1}{\mu(B(x,s))}\int_{B(x,s)} \chi_{B(x_n,2s)}(y) d\mu(y) = 0.$$
Therefore for all $m, n \in \N$ with $m \neq n$ and for all $x \in B(x_n,s)$, $|A_sf_n(x) - A_sf_m(x)| = 1$,
 which implies that $\|A_sf_n - A_sf_m\|_\infty \geq 1$.
Hence $A_s(\{f_n \mid n \in \N\})$ is not relatively compact,
 that contradicts the compactness of $A_s$.
It follows that $X$ is bounded.
\end{proof}

\subsection*{Acknowledgment}
The author would like to thank Professor Przemys{\l}aw G\'{o}rka for his advice.

\end{document}